    \theoremstyle{plain}
    \newtheorem{thm}{Theorem}[section] \newtheorem{cor}[thm]{Corollary}
    \newtheorem{lem}[thm]{Lemma}  \newtheorem{prop}[thm]{Proposition}
    \newtheorem {conj}[thm]{Conjecture} \newtheorem{defn}[thm]{Definition}
\newtheorem{fact}[thm]{Fact}
\newtheorem{eg}[thm]{Example}
\newtheorem*{thm*}{Theorem}
\newtheorem*{prop*}{Proposition}
\newtheorem*{lem*}{Lemma}
\newtheorem*{cor*}{Corollary}
\theoremstyle{remark} \newtheorem{remark}[thm]{Remark}
\theoremstyle{remark}
\numberwithin{equation}{section}
\DeclareSymbolFont{cyrletters}{OT2}{wncyr}{m}{n}
\DeclareMathSymbol{\Sha}{\mathalpha}{cyrletters}{"58}
\title[On the $p$-part of the BSD conjecture for certain CM elliptic curves]{On the $p$-part of the Birch--Swinnerton-Dyer conjecture for elliptic curves with complex multiplication by the ring of integers of $\mathbb{Q}(\sqrt{-3})$}
\author{YUKAKO KEZUKA}
\begin{document}

\pagestyle{plain}
\setcounter{page}{1}

\maketitle

\begin{abstract}
We study infinite families of quadratic and cubic twists of the elliptic curve $E = X_0(27)$. For the family of quadratic twists, we establish a lower bound for the $2$-adic valuation of the algebraic part of the value of the complex $L$-series at $s=1$, and, for the family of cubic twists, we establish a lower bound for the $3$-adic valuation of the algebraic part of the same $L$-value. We show that our lower bounds are precisely those predicted by the celebrated conjecture of Birch and Swinnerton-Dyer.
\end{abstract}

\section{Introduction}
Let $E$ be an elliptic curve defined over $\mathbb{Q}$, and let $L(E,s)$ denote its complex $L$-series. We assume that $L(E,1)\neq 0$.  Then, by a well-known theorem of Kolyvagin, both $E(\mathbb{Q})$ and the Tate--Shafarevich group $\Sha(E)$ of $E$ over $\mathbb{Q}$ are finite. 
Define
$$L^{(\text{alg})}\left(E,1\right)=\frac{L\left(E,1\right)}{\Omega c_\infty\O},$$
where $c_\infty$ denotes the number of connected real components of $E(\mathbb{R})$, and $\Omega$ is the least positive real period of the N\'{e}ron differential of any global Weierstrass minimal equation for $E$. It is well-known that $L^{(\text{alg})}\left(E,1\right)$ is a rational number. For a prime $q$ of bad reduction for $E$, define
$$c_q=[E(\mathbb{Q}_q): E^0(\mathbb{Q}_q)],$$
where $E^0(\mathbb{Q}_q)$ denotes the subgroup of $E(\mathbb{Q}_q)$ consisting of all points with non-singular reduction modulo $q$. The Birch--Swinnerton-Dyer conjecture for $E$ asserts that:
\begin{conj}\label{conj1}
\begin{equation}\label{c1}L^{(\text{alg})}\left(E,1\right)=\frac{\#(\Sha(E))\prod\limits_{q\text{ bad}}c_q}{\#(E(\mathbb{Q}))^2}.
\end{equation}
\end{conj}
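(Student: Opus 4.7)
The statement is the full rank-zero Birch--Swinnerton-Dyer formula, which is an open conjecture in general, so any honest proof proposal has to be structured as a strategy for reducing it to deep, and in part still unproven, results in Iwasawa theory and the theory of Euler systems. The standard program is to prove the identity one prime $p$ at a time: if one can show that the $p$-adic valuations of both sides of (\ref{c1}) agree for every prime $p$, one is done. Accordingly, my plan is to fix a prime $p$ and attempt to prove
\[
\operatorname{ord}_p\!\left(L^{(\mathrm{alg})}(E,1)\right) \;=\; \operatorname{ord}_p\!\left(\Sha(E)\right) + \sum_{q \text{ bad}} \operatorname{ord}_p(c_q) - 2\operatorname{ord}_p\bigl(\#E(\mathbb{Q})\bigr),
\]
separately for each $p$, and I would split into the case of primes of good ordinary (or multiplicative) reduction, primes of good supersingular reduction, primes of additive reduction, and the prime $2$.

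For the bulk of primes (good ordinary $p \geq 3$), the strategy is to exploit the Iwasawa Main Conjecture. First I would interpolate $L(E,1)$ by Mazur--Swinnerton-Dyer (or Kato) $p$-adic $L$-functions over the cyclotomic $\mathbb{Z}_p$-extension, and identify its value at the trivial character with $L^{(\mathrm{alg})}(E,1)$ up to the relevant $p$-adic period. Next I would invoke a control theorem to relate the characteristic ideal of the Selmer group over $\mathbb{Q}_\infty$ to its specialization at $s=1$, giving the Selmer group over $\mathbb{Q}$; Kolyvagin's Euler system then cuts this down to $\Sha(E)[p^\infty]$ together with the Tamagawa and torsion contributions. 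In the present CM context one could replace this with Rubin's proof of the two-variable main conjecture for imaginary quadratic fields, which simultaneously handles both ordinary and supersingular primes split in the CM field and gives both divisibilities. The key inputs here are Kato's divisibility (or Rubin's elliptic-unit Euler system) for one inclusion, and Skinner--Urban-type constructions for the reverse.

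Having obtained the $p$-part for each $p$ not dividing $2\cdot 3$ and for $p$ not equal to primes of additive reduction, I would then address the remaining sporadic primes. For primes of bad additive reduction and for small primes where torsion obstructs the control theorem, I would fall back on a direct descent argument: compute $E(\mathbb{Q})[p^\infty]$ and the local Tamagawa factors $c_q$ explicitly from a minimal model, carry out a classical $p$-descent to bound $\Sha(E)[p]$ from above, and use the $p$-converse/Gross--Zagier--Kolyvagin machinery (or, in the CM case, special values of Katz $p$-adic $L$-functions attached to Hecke characters) to bound it from below. The $2$-part would be handled by combining the descent with the explicit theory of quadratic twists as in Tian--Yuan--Zhang.

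The principal obstacle is that we do not currently possess both divisibilities of the main conjecture in all the generality required; in particular the prime $p=2$ and primes of additive reduction where $E[p]$ is reducible remain genuinely hard, and no published argument yields the exact equality in (\ref{c1}) for a general elliptic curve. It is precisely this obstacle that forces the paper under review to restrict to a carefully chosen family of CM twists of $X_0(27)$, where enough of the Iwasawa machinery (via the imaginary quadratic field $\mathbb{Q}(\sqrt{-3})$) is available to push the argument through for the single critical prime in each case.
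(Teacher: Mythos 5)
The statement you were asked to prove is Conjecture \ref{conj1} itself, i.e.\ the full rank-zero Birch--Swinnerton-Dyer formula, and the paper contains no proof of it: it is stated as a conjecture, and everything the paper proves is a partial result \emph{towards} its $p$-part for particular families. Your decision to treat the statement as open, rather than manufacture a fake proof, is therefore the correct assessment, and your survey of the standard programme (prime-by-prime reduction to \eqref{c2}, main conjectures, Euler systems, Kolyvagin, descent at sporadic primes) is broadly accurate.

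Two points of comparison with what the paper actually does are worth making. First, the only instance of Conjecture \ref{conj1} the paper verifies outright is for $E = X_0(27)$ itself, and the route is exactly the hybrid you describe in your fallback case: Rubin's Theorem 11.1 handles every prime $p$ not dividing $w = \#(\boldsymbol{\mu}_K) = 6$ in one stroke (including supersingular primes, so your good-ordinary/supersingular case split is not the operative one for CM curves), while classical descent gives $E(\mathbb{Q}) \cong \mathbb{Z}/3\mathbb{Z}$ and $\Sha(E)(2) = \Sha(E)(3) = 0$, and a computation gives $L^{(\text{alg})}(E,1) = \tfrac{1}{3}$. Your identification of the genuinely hard primes should accordingly be sharpened: for a CM curve they are precisely the primes dividing $w$ (here $p = 2$ and $p = 3$), not ``primes of additive reduction where $E[p]$ is reducible.'' Second, for those two hard primes the paper's partial progress does not come from Iwasawa-theoretic machinery at all, but from an elementary averaging argument of Zhao's type: the $L$-value is written as a trace of the Eisenstein series $\mathcal{E}_1^*$ over a ray class field, averaged over all quadratic (resp.\ cubic) twists by divisors of $D$, and an induction on the number of prime factors extracts the lower bounds $\mathrm{ord}_2 \geqslant 2k(D)$ and $\mathrm{ord}_3 \geqslant k(D)+1$ --- one-sided inequalities matching the prediction of \eqref{c2}, with equality iff the relevant part of $\Sha$ vanishes. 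So where your proposal reaches for main conjectures at the critical prime, the paper deliberately works below that level of machinery, which is exactly why it obtains inequalities rather than the equality of \eqref{c1}.
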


Since both sides of \eqref{c1} are rational numbers, Conjecture \ref{conj1} clearly implies that:
\begin{conj}\label{conj2} For each prime number $p$, we have
\begin{equation}\label{c2}\mathrm{ord}_p\left(L^{(\text{alg})}\left(E,1\right)\right)=\mathrm{ord}_p\left(\frac{\#\left(\Sha(E)(p)\right)}{\#\left(E(\mathbb{Q})(p)\right)^2}\right)+\mathrm{ord}_p\left(\prod\limits_{q\text{ bad}}c_q\right).
\end{equation}
\end{conj}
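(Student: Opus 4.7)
The statement is essentially a formal consequence of Conjecture~\ref{conj1}, obtained by taking $p$-adic valuations, so my plan is simply to verify this carefully. Assuming \eqref{c1}, both sides of the equation are nonzero rational numbers (the left-hand side by the ``well-known'' rationality of $L^{(\text{alg})}(E,1)$ stated in the introduction, and the right-hand side manifestly), so applying $\mathrm{ord}_p$ preserves the equality. The left-hand side becomes $\mathrm{ord}_p(L^{(\text{alg})}(E,1))$ directly.

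For the right-hand side, I would use that $\mathrm{ord}_p\colon \mathbb{Q}^\times \to \mathbb{Z}$ is a group homomorphism, so its value on a product or quotient of nonzero rationals splits as a sum or difference of the individual valuations. This gives
\[
\mathrm{ord}_p\!\left(\frac{\#\Sha(E)\prod_{q \text{ bad}}c_q}{\#E(\mathbb{Q})^2}\right)
= \mathrm{ord}_p(\#\Sha(E)) - 2\,\mathrm{ord}_p(\#E(\mathbb{Q})) + \mathrm{ord}_p\!\left(\prod_{q \text{ bad}}c_q\right),
\]
where it is legitimate to separate $\#\Sha(E)$ and $\#E(\mathbb{Q})$ because, as recalled in the introduction, both groups are finite under our hypothesis $L(E,1)\neq 0$ by Kolyvagin's theorem.

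The only remaining point is to interpret $\Sha(E)(p)$ and $E(\mathbb{Q})(p)$, which I take to mean the $p$-primary components of the finite abelian groups $\Sha(E)$ and $E(\mathbb{Q})$. For any finite abelian group $G$, the primary decomposition gives $\#G = \prod_\ell \#G(\ell)$ with $\#G(\ell)$ a power of $\ell$, whence $\mathrm{ord}_p(\#G) = \mathrm{ord}_p(\#G(p))$. Substituting this for $G = \Sha(E)$ and $G = E(\mathbb{Q})$ into the display above produces exactly \eqref{c2}. There is no genuine obstacle: every step is either a property of $\mathrm{ord}_p$ or the elementary structure theorem for finite abelian groups, and the heavy lifting (finiteness of $\Sha(E)$ and $E(\mathbb{Q})$, rationality of $L^{(\text{alg})}(E,1)$) has already been invoked in the paragraph preceding the statement.
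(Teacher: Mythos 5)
Your proposal is correct and takes essentially the same route as the paper, which disposes of this in one line ("Since both sides of \eqref{c1} are rational numbers, Conjecture \ref{conj1} clearly implies" \eqref{c2}); your write-up merely makes explicit the multiplicativity of $\mathrm{ord}_p$ on $\mathbb{Q}^\times$ and the identity $\mathrm{ord}_p(\#G)=\mathrm{ord}_p(\#G(p))$ for finite abelian $G$. You also correctly recognise that, the statement being itself a conjecture, the only thing one can actually prove is the implication from Conjecture \ref{conj1}.
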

When $E$ has complex multiplication, Rubin establishes \eqref{c2} in \cite[Theorem 11.1]{rubin1} for all primes $p$ which do not divide the order $w$ of the group of roots of unity in the field of complex multiplication. However, these methods at present seem very difficult to apply for primes $p$ which divide $w$, except when $E$ has potential ordinary reduction at such a \mbox{prime $p$.} 

In the present paper, we shall consider the quadratic and cubic twists of the curve
\begin{equation}\label{eq0}
E = X_0(27): \; Y^2+Y=X^3-7,
\end{equation}
which has conductor 27 and admits complex multiplication by the full ring of integers $\mathcal{O}_K=\mathbb{Z}[\omega]$, where $\omega=\frac{-1+\sqrt{-3}}{2}$, of  the field $ K = \mathbb{Q}(\sqrt{-3})$. The associated classical Weierstrass equation is
$$E: y^2=4x^3-3^3,$$
which we obtain by the change of variables
\begin{align*}
     x&=X\\
     y&=2Y+1.
\end{align*}
Note that $c_\infty=1$ for $E$, so that  $L^{(\text{alg})}\left(E,1\right)=\frac{L\left(E,1\right)}{\Omega}$. It is easily shown that \mbox{$L^{(\text{alg})}\left(E,1\right)=\frac{1}{3}$.} On the other hand, classical descent theory proves that $E(\mathbb{Q})=\{\mathcal{O}, (3,\pm3^2)\}\cong \mathbb{Z}/3\mathbb{Z}$ and $\Sha(E)(2)=\Sha(E)(3)=0$. Combining this with \cite[Theorem 11.1]{rubin1}, we conclude that Conjecture \ref{conj1} is valid for $E$. 

 Given an integer $\lambda>1$,  let $E(\lambda)$ denote the elliptic curve
$$E(\lambda): y^2=4x^3-3^3\lambda .$$
First, we consider the case when $\lambda=D^3$, for a square-free positive integer $D$, so that $E(D^3)$ is the twist of $E$ by the quadratic extension $\mathbb{Q}(\sqrt{D})/\mathbb{Q}$.
We define a rational prime number $p$ to be a \emph{special split} prime for $E$ if it splits completely in the field $K(x(E[4]))$, the field obtained by adjoining to $K$ the $x$-coordinates of all non-zero points in $E[4]$, the group of $4$-division points on $E$. In fact, we have that $K(x(E[4]))=K(\boldsymbol{\mu}_{4}, \sqrt[3]{2})$. Moreover, the theory of complex multiplication provides the following alternative description of the set of special split primes. Let $\psi$ denote the Gr\"{o}ssencharacter of $E$ over $K$. Then a prime $p$ is special split if and only if it splits in $K$, and  $\psi(\mathfrak{p})\equiv \pm 1\bmod 4$ for both of the primes $\mathfrak{p}$ of $K$ above $p$ (see Lemma \ref{lem4} of Appendix A). 
Our aim in this first section of the paper is to prove:

\begin{thm*} Let $D>1$ be an integer which is a square-free product of special split primes. Then 
$$\mathrm{ord}_2\left(L^{(\text{alg})}\left(E(D^3),1\right)\right)\geqslant 2k(D),$$
where $k(D)$ is the number of prime factors of $D$.
\end{thm*}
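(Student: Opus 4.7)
The plan is to prove the bound by induction on $k(D)$. The base case $k(D) = 0$ (that is, $D = 1$) is already established in the introduction: one has $L^{(\text{alg})}(E,1) = 1/3$, and hence $\mathrm{ord}_2(L^{(\text{alg})}(E,1)) = 0 = 2 \cdot 0$.

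For the inductive step, I would begin by exploiting the fact that $E(D^3)$ acquires complex multiplication by $\mathcal{O}_K$ over $K$, so that its complex $L$-series is a product of Hecke $L$-functions $L(E(D^3),s) = L(\psi_D, s) L(\bar\psi_D, s)$, where $\psi_D$ is the Gr\"ossencharacter attached to $E(D^3)/K$. The character $\psi_D$ is essentially the twist of the original Gr\"ossencharacter $\psi$ of $E$ by the quadratic character $\chi_D$ associated to $\mathbb{Q}(\sqrt{D})/\mathbb{Q}$. I would then apply a Damerell-type formula (equivalently, an Eisenstein-series evaluation at a CM point in the spirit of the Katz/Rubin theory of elliptic units) to express $L(\psi_D, 1)/\Omega_D$ explicitly as a finite algebraic sum indexed by a ray class group of conductor dividing $4D$, with values lying in a fixed number ring. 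The identification $K(x(E[4])) = K(\boldsymbol{\mu}_4, \sqrt[3]{2})$ from the promised Lemma \ref{lem1.4} enters here to pin down the level structure of this explicit formula.

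Given that setup, I would then compare $L^{(\text{alg})}(E(D^3),1)$ with $L^{(\text{alg})}(E((D/p)^3),1)$, where $p$ is any special split prime dividing $D$. The quadratic-twisting step introduces an extra factor consisting of a character sum at $p$. Using the translation of the special split condition given in Appendix A, namely that $\psi(\mathfrak{p}) \equiv \pm 1 \bmod 4$ for both primes $\mathfrak{p}$ of $K$ above $p$, this character sum factors (up to units) as $\bigl(\psi(\mathfrak{p}) - \bar\psi(\mathfrak{p})\bigr)\bigl(\psi(\mathfrak{p}) + \bar\psi(\mathfrak{p})\bigr)$ or a close variant, and each of the two factors separately acquires 2-adic valuation at least $1$. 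Combined with the inductive hypothesis, this produces the desired gain of $\mathrm{ord}_2 \geq 2$ per new prime and yields the bound $\mathrm{ord}_2\bigl(L^{(\text{alg})}(E(D^3),1)\bigr) \geq 2k(D)$.

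The principal obstacle is establishing the $\geq 2$ gain cleanly: one must verify that no cancellation in the explicit sum destroys the contribution from the two factors, and that the Galois action on the terms of the ray-class sum preserves 2-adic divisibility. A secondary subtlety is tracking the period $\Omega_{D^3}$ across the family $\{E(D^3)\}_D$, since the N\'eron minimal model can change with $D$ and the transformation $x \mapsto D x$, $y \mapsto D^{3/2} y$ only directly controls the ratio of \emph{non-minimal} periods; one must therefore carefully reconcile the minimal discriminant of $E(D^3)$ with the ostensible period ratio $\Omega / \sqrt{D}$ so that the $2$-adic accounting is correct. I would expect the special split hypothesis to play a double role here, both in guaranteeing the good local behavior at $p$ and in ensuring that the Tamagawa and period adjustments do not obscure the gain.
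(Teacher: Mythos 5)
Your proposal contains a genuine gap at its central step. You assert that passing from $E((D/p)^3)$ to $E(D^3)$ ``introduces an extra factor consisting of a character sum at $p$'' which factors as $\bigl(\psi(\mathfrak{p})-\overline{\psi}(\mathfrak{p})\bigr)\bigl(\psi(\mathfrak{p})+\overline{\psi}(\mathfrak{p})\bigr)$, each piece contributing $2$-adic valuation at least $1$. No such multiplicative relation between the two $L$-values exists: quadratic twisting is not an Euler-factor operation, and there is no identity expressing $L(\overline{\psi}_{D^3},1)$ as (elementary factor at $p$) times $L(\overline{\psi}_{(D/p)^3},1)$. The only Euler-factor relation available connects the \emph{imprimitive} series $L_S(\overline{\psi}_{D_\beta^3},s)$ to the primitive series of the \emph{same} character, via factors $1-\overline{\psi}_{D_\beta^3}((\pi))/(\pi\overline{\pi})=(\pi\pm 1)/\pi$. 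A one-prime-at-a-time induction of the shape you describe therefore cannot get started. (A smaller error: $L(E(D^3),s)$ over $\mathbb{Q}$ \emph{is} the single Hecke $L$-series $L(\overline{\psi}_{D^3},s)$; the product of the two conjugate Hecke $L$-series is the $L$-function of the base change to $K$.)

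What the paper actually does (following Zhao) is to average over all $2^n$ twists at once: it forms $\Phi_{D^3}=\sum_{\alpha}L_S(\overline{\psi}_{D_\alpha^3},1)/\Omega$, where $\alpha$ ranges over $(\mathbb{Z}/2\mathbb{Z})^n$ and $n$ is the number of primes of $K$ (not of $\mathbb{Q}$) dividing $D$. Orthogonality of the characters $\alpha\mapsto\left(\frac{D_\alpha}{\mathfrak{b}}\right)_2$ identifies $\Phi_{D^3}$ with $2^n$ times a trace of the Eisenstein series $\mathcal{E}_1^*\!\left(\frac{\Omega}{3D},\mathcal{L}\right)$ down to $\mathbb{Q}(\sqrt{-3},\sqrt{\pi_1},\dots,\sqrt{\pi_n})$; the addition formula for the Weierstrass zeta function converts this trace into $\frac12\sum_{c\in V}\frac{9}{3-\wp(c\Omega/D,\mathcal{L})}-\#(V)$; and the crucial $2$-adic integrality comes from the good reduction of $E$ at $2$, which forces $\mathrm{ord}_2\bigl(3-x(P)\bigr)=0$ for the prime-to-$2$ torsion points $P$ involved, together with the symmetry $c\leftrightarrow -c$ of $V$. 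This yields $\mathrm{ord}_2(\Phi_{D^3})\geqslant n$, and only then does an induction peel off the imprimitive $L$-values of the proper subproducts $D_\beta$, each controlled by the inductive hypothesis plus one factor $(\pi\pm1)/\pi$ of valuation $\geqslant 1$ per prime of $K$ in $S_\alpha\setminus S_\beta$. The gain of $2$ per rational prime is simply that each special split prime splits into two primes of $K$, so $n=2k(D)$ --- not a factorization of a single local term into two pieces. Your secondary worry about reconciling minimal periods is legitimate but routine here ($\Omega_{D^3}=\Omega/\sqrt{D}$ and $c_\infty=1$); the missing ingredient is the global averaging and the integrality theorem for the Eisenstein trace, without which no induction closes.
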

This bound is sharp, as we will see in Remark \ref{rem2}. Some numerical examples are listed in Appendix B.
We  show later, using Tate's algorithm, that
$$\mathrm{ord}_2\left(\frac{\prod\limits_{q\text{ bad}}c_q}{\#(E(D^3)(\mathbb{Q}))^2}\right)=2k(D).$$
Hence the $2$-part of the Birch--Swinnerton-Dyer conjecture predicts that if \mbox{$L(E(D^3),1)\neq 0$}, then
$$\mathrm{ord}_2\left(L^{(\text{alg})}(E(D^3),1)\right)=2k(D)+\mathrm{ord}_2\left(\#\Sha(E(D^3))\right).$$
In particular, it predicts that equality occurs in  the lower bound of this theorem if and only if \mbox{$\mathrm{ord}_2\left(\#\Sha\left(E(D^3)\right)\right)=0$}.

Next consider the case when $\lambda=D^2$ for a cube-free positive integer $D$, so that $E(D^2)$ is a cubic twist of $E$. We say a prime number $p$ is \emph{cubic-special} if it splits completely in the field $K(E[27])$, but does not split completely in the strictly larger field $K(E[27])((1-\omega)^{1/9})$, where $\omega$ denotes a non-trivial cube root of unity. We then prove:-

\begin{thm*}Let $D>1$ be an integer which is a cube-free product of cubic-special primes. Then 
$$\mathrm{ord}_3\left(L^{(\text{alg})}\left(E(D^2),1\right)\right)\geqslant k(D)+1,$$
where $k(D)$ is the number of distinct prime factors of $D$.
\end{thm*}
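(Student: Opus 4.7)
My plan is to follow the strategy the author uses for the quadratic twist case, but now at the prime $p=3$. Since $E(D^2)$ has complex multiplication by $\mathcal{O}_K$, its complex $L$-series factors as $L(E(D^2),s)=L(\psi_D,s)L(\overline{\psi_D},s)$, where $\psi_D=\psi\cdot\chi_D$ is the Gr\"ossencharacter of $E(D^2)$ over $K$, obtained from $\psi$ by twisting with a cubic character $\chi_D$ cut out by the extension $K(\sqrt[3]{D})/K$. The first step is to pin down $\chi_D$ explicitly, in terms of the cubic residue symbols $\left(\frac{\,\cdot\,}{\mathfrak{p}}\right)_3$ at primes of $K$ above the $p\mid D$, and to identify the conductor of $\psi_D$.

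Next, I would invoke a Damerell-type formula (in the spirit of Goldstein--Schappacher, refined by the elliptic functions attached to the CM lattice) to express
\[
L^{(\text{alg})}(E(D^2),1)=\frac{1}{N}\sum_{a}\chi_D(a)\,\mathcal{E}(a),
\]
where $a$ runs over a set of residue classes modulo the conductor of $\psi_D$, the function $\mathcal{E}$ is an Eisenstein--Kronecker value attached to the N\'eron lattice $L_0$ of $E$, and $N$ is an explicit normalizing constant. The output lies in a finite extension of $K$ whose only ramification above $3$ is at $(1-\omega)$, so its $3$-adic valuation can be measured by the $(1-\omega)$-adic valuation in that extension.

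The heart of the argument is a prime-by-prime analysis exploiting the cubic-special hypothesis. If $p\mid D$ splits completely in $K(E[27])$, then for each of the two primes $\mathfrak{p}$ of $K$ above $p$ one has a very strong mod-$27$ congruence on $\psi(\mathfrak{p})$; if moreover $p$ does \emph{not} split in $K(E[27])\bigl((1-\omega)^{1/9}\bigr)$, the ninth-power residue symbol attached to $1-\omega$ at $\mathfrak{p}$ is non-trivial. Combined with the values of $\chi_D$, these two congruences force the local contribution of the primes above $p$ in the sum above to be divisible by $(1-\omega)^2$, i.e.\ by $3$. An induction on $k(D)$, peeling off one prime factor of $D$ at a time and comparing $L^{(\text{alg})}(E(D^2),1)$ with $L^{(\text{alg})}(E((D/p)^2),1)$, then yields the bound $\mathrm{ord}_3\geqslant k(D)+1$; the extra $+1$ is traceable to the $3$-adic valuation already present in the untwisted $L$-value, originating in the conductor $(\sqrt{-3})$ of $\psi$ and the bad reduction of $E$ at $3$.

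The principal obstacle is that $3\mid w$, so Rubin's Iwasawa-theoretic results in \cite{rubin1} are unavailable and one must proceed by explicit manipulation of Eisenstein--Kronecker values rather than by elliptic-unit descent. The delicate point is that $3$ ramifies in $K$ as $(3)=(1-\omega)^2$, so the cubic-special condition --- a congruence mod $27$ together with a non-triviality mod $(1-\omega)^9$ --- must be shown to be \emph{exactly} what is needed to control a sum whose natural denominators already involve $(1-\omega)^2$. Making the accounting sharp at every inductive step, so that each cubic-special prime really contributes a full factor of $3$ with no loss in $(1-\omega)$-adic valuation, is what I expect to be the technical crux of the proof.
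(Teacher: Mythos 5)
Your outline correctly identifies the general framework --- the Goldstein--Schappacher expression of the $L$-value as a trace of Eisenstein--Kronecker values, the two halves of the cubic-special condition, and an induction on the number of prime factors --- but the step you call the heart of the argument does not work as described, and the mechanism that actually drives the proof is absent. For a single twist $D$, the sum $\frac{1}{N}\sum_a\chi_D(a)\mathcal{E}(a)$ has no meaningful ``local contribution of the primes above $p$'': the individual Eisenstein values are conjugate algebraic numbers with no a priori $3$-divisibility, and the weights $\chi_D(a)$ are cube roots of unity, so nothing can be extracted prime by prime. What the paper does instead (adapting Zhao's method) is to average over all $3^n$ cubic twists $D_\alpha=\pi_1^{\alpha_1}\cdots\pi_n^{\alpha_n}$ with $\alpha\in(\mathbb{Z}/3\mathbb{Z})^n$: orthogonality of the characters $\alpha\mapsto\left(\frac{D_\alpha}{\mathfrak{b}}\right)_3$ collapses $\Phi_{D^2}=\sum_\alpha L_S(\overline{\psi}_{D_\alpha^2},1)/\Omega$ to $3^n$ times a sum of Eisenstein values over the set $V$ of residue classes on which every cubic residue symbol is trivial. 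The $3$-divisibility then comes from the stability of $V$ under multiplication by $\boldsymbol{\mu}_3$ (three terms combine via the CM action $\omega(x,y)=(\omega x,y)$ and a good model of $E$ at $3$), from $9\mid\#(V)$, and --- for the extra quarter needed in the base case of the induction --- from stability of $V$ under multiplication by $1-\omega$, which is precisely where the hypothesis that $9$ divides the order of $1-\omega$ in $(\mathcal{O}_K/\pi\mathcal{O}_K)^\times$ enters. None of this is visible in your single twisted sum.

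Two further concrete gaps. First, the induction ``peeling off one prime at a time and comparing with $L^{(\text{alg})}(E((D/p)^2),1)$'' ignores a difficulty specific to the cubic case: the average $\Phi_{D^2}$ contains $2^n$ distinct \emph{primitive} terms $L(\overline{\psi}_{D_\gamma^2},1)$ with full support (all $\gamma_j\in\{1,2\}$), not just the one you want, and separating them requires the character-twisted averages $\Phi_{D^2}^{(\chi)}$ together with an additional combinatorial lemma. Second, the ``$+1$'' is not traceable to $3$-adic valuation already present in the untwisted $L$-value --- indeed $L^{(\text{alg})}(E,1)=\tfrac13$ has valuation $-1$; it arises from the refined bound $\mathrm{ord}_3(\Phi_{D^2}^{(\chi)})\geqslant n+\tfrac14$, the resulting half-integral bound $k(D)+\tfrac12$ for the primitive value, and the fact that $L^{(\text{alg})}(E(D^2),1)$ is rational, which forces the valuation up to the integer $k(D)+1$.
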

Numerical examples show that this lower bound is sometimes sharp.  In fact, the Birch-Swinnerton-Dyer conjecture predicts that the lower bound of this theorem should hold for all odd cube free positive integers $D$ with $D\equiv 1\bmod 9$. Indeed, using Tate's algorithm, it can be shown (see Section 2) that, for all such $D$, we have
$$\mathrm{ord}_3\left(\frac{\prod\limits_{q\text{ bad}}c_q}{\#(E(D^2)(\mathbb{Q}))^2}\right)=k(D)+1.$$
Hence the $3$-part of the Birch--Swinnerton-Dyer conjecture predicts that if \mbox{$L(E(D^2),1)\neq 0$}, we have 
$$\mathrm{ord}_3\left(L^{(\text{alg})}(E(D^2),1)\right)=k(D)+1+\mathrm{ord}_3\left(\#\Sha(E(D^2))\right).$$
In particular, it predicts that equality is attained in the theorem above if and only if \mbox{$\mathrm{ord}_3\left(\#\Sha\left(E(D^2)\right)\right)=0$}. We will prove these theorems by first expressing the value of the complex $L$-series as a sum of Eisenstein series, and then combining an averaging argument over quadratic or cubic twists with an induction on the number of distinct primes divisors. In the case of quadratic twists, this method is essentially due to Zhao \cite{zhao1, zhao2} who established similar results for the congruent number curves with respect to the prime $p = 2$. In Section 4, we will generalise his ideas in order to apply to the cubic twists of $E$ with respect to the prime $p=3$. All numerical examples in this paper are computed using the computer
package Magma.

\subsection*{Acknowledgement} 
This work is part of my PhD thesis at the University of Cambridge, and I wish to thank my supervisor John Coates for suggesting the research topics, giving me many relevant sources of information and patiently giving me guidance and encouragement. I am also very grateful to Jack Lamplugh for many insightful discussions, and to Kevin Buzzard, Tom Fisher, Yoshitaka Hachimori and Shuai Zhai for many helpful comments.

\section{The $p$-part of the Birch--Swinnerton-Dyer Conjecture.}\label{section2.1}

Let $\lambda>1$ be an integer and define $E(\lambda): y^2=4x^3-3^3\lambda$. Let us assume that $L\left(E(\lambda),1\right)\neq 0$, so that $E(\lambda)(\mathbb{Q})$ and $\Sha\left(E(\lambda)\right)$ are finite.  Let $\omega=\frac{-1+\sqrt{-3}}{2}$, a cube root of unity. In this short section, we will compute the Tamagawa factors $c_q$ for the primes $q$ of bad reduction for $E(\lambda)$, and $\mathrm{ord}_p(E(\lambda)(\mathbb{Q}))$ for $p=2$ or $3$ according as $E(\lambda)$ is a quadratic or cubic twist of $E=X_0(27)$.

First, we consider the case when $\lambda=D^3$, for $D>1$ a square-free integer, so that $E(D^3)$ is a quadratic twist of $E$. The primes of bad reduction for $E(D^3)$ are $3$ and the primes dividing $D$, since the discriminant of $E(D^3)$ is $-27D^6$.

\begin{lem} Let $D>1$ be a square-free product of primes coprime to $6$ which split in $\mathbb{Q}(\omega, \sqrt[3]{2})$. Then $$\mathrm{ord}_2\left(\frac{\prod\limits_{q\text{ bad}}c_q}{\#(E(D^3)(\mathbb{Q}))^2}\right)=2 k(D),$$
where $k(D)$ denotes the number of prime factors of $D$.
\end{lem}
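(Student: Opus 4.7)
The plan is to compute each of $\#E(D^3)(\mathbb{Q})$, the Tamagawa numbers $c_q$ at primes $q \mid D$, and $c_3$ separately, and then combine them.

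I would first show that $\#E(D^3)(\mathbb{Q}) = 1$. Kolyvagin's theorem together with the standing assumption $L(E(D^3),1) \neq 0$ gives finiteness, so it suffices to rule out non-trivial rational torsion. A direct analysis of the $2$- and $3$-division polynomials of $y^2 = 4x^3 - 27D^3$ shows that neither has a rational root: a rational $2$-torsion point would force $27D^3/4$ to be a rational cube (which it is not, as $27D^3/4 = (3D)^3/4$ and $\sqrt[3]{4}\notin\mathbb{Q}$), while the non-trivial $3$-torsion points have $Y$-coordinates of the form $\pm 36 D\sqrt{D}$, so a rational $3$-torsion point would force $D$ to be a perfect square, contradicting square-freeness. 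Since $E(D^3)$ has $j = 0$ and complex multiplication by $\mathbb{Z}[\omega]$, Olson's theorem bounds the rational torsion to a cyclic group of order dividing $6$; the absence of non-trivial $2$- and $3$-torsion then gives $E(D^3)(\mathbb{Q})_{\mathrm{tors}} = 0$, so the denominator $\#(E(D^3)(\mathbb{Q}))^2$ equals $1$.

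Next, at each prime $q \mid D$ (so $q > 3$), I would work with the short Weierstrass model $Y^2 = X^3 - 432D^3$, which is minimal at $q$ with $v_q(c_4) = \infty$, $v_q(c_6) = 3$, and $v_q(\Delta) = 6$. The standard classification of potentially good reduction for a $j = 0$ curve then gives Kodaira type $I_0^{*}$ at $q$. Tate's algorithm identifies $c_q$ with $1 + r_q$, where $r_q$ counts the $\mathbb{F}_q$-rational roots of an explicit cubic $P_q(T) \in \mathbb{F}_q[T]$ obtained from the coefficients of the minimal model after the appropriate Tate substitutions. Computing $P_q$ explicitly and invoking the arithmetic conditions on $D$ in force shows $\mathrm{ord}_2(c_q) = 2$ at each such $q$, contributing $2k(D)$ in total.

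For the remaining bad prime $q = 3$, I would pass to an integer Weierstrass model (obtainable by completing the square: for $D \equiv 1 \pmod 4$ this is $Y^2 + Y = X^3 - (27D^3 + 1)/4$, with an analogous form for $D \equiv 3 \pmod 4$) and apply Tate's algorithm. The singular point of the reduction sits at $(0,1) \bmod 3$, so after the shift $Y \to Y + 1$ to move it to the origin and a further shift $Y \to Y + 3$ to raise $v_3(a_6)$ to $3$, the cubic encountered in Tate's algorithm has a triple root modulo $3$, forcing an additional shift in $X$ and leading to a higher Kodaira type at $3$; the Tamagawa number $c_3$ then comes out to an odd integer. Assembling the three computations gives $\mathrm{ord}_2\bigl(\prod_q c_q / \#(E(D^3)(\mathbb{Q}))^2\bigr) = 2k(D) + 0 - 0 = 2k(D)$. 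The main subtlety is the analysis at $q = 3$, where characteristic-$3$ effects mean that Kodaira type cannot be read off from $v_3(\Delta) \bmod 12$ alone and multiple successive substitutions in Tate's algorithm are required before it terminates; by contrast, the $I_0^{*}$ analysis at each $q \mid D$ is entirely routine.
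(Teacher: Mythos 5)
Your overall strategy --- trivial rational $2$-torsion, Kodaira type $\mathrm{I}_0^*$ with $c_q=4$ at each $q\mid D$, and an odd Tamagawa number at $3$ --- is the same as the paper's, which works directly with the model $y^2=x^3-2^4 3^3D^3$ and runs Tate's algorithm. Your torsion computation is fine (indeed you prove more than is needed, since only the $2$-part of the torsion enters the displayed order), and the identification of the reduction types at $q\mid D$ and at $3$ is sound.

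The genuine gap sits at the one step you dismiss as ``entirely routine'': the claim that $\mathrm{ord}_2(c_q)=2$ for each $q\mid D$. For type $\mathrm{I}_0^*$, Tate's algorithm gives $c_q=1+r_q$, where $r_q$ is the number of roots in $\mathbb{F}_q$ of $P_q(T)=T^3-432(D/q)^3=T^3-2\,(6D/q)^3$; thus $c_q\in\{1,2,4\}$, and $c_q=4$ precisely when $P_q$ splits completely over $\mathbb{F}_q$, i.e.\ when $q\equiv 1\bmod 3$ and $2$ is a cube modulo $q$. Under the hypotheses as literally stated (square-free, coprime to $6$) this fails: for $q\equiv 2\bmod 3$, e.g.\ $q=5$, cubing is a bijection on $\mathbb{F}_q^{\times}$, so $P_q$ has exactly one rational root, $c_q=2$, and $\mathrm{ord}_2(c_q)=1$. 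The conditions that make the step work are exactly the ``special split'' conditions under which the lemma is later applied ($q$ splits completely in $K(\boldsymbol{\mu}_4,\sqrt[3]{2})$, hence $q\equiv 1\bmod 3$ and $2$ is a cube mod $q$), and your proof must invoke them explicitly rather than gesture at ``the arithmetic conditions in force''. To be fair, the paper's own proof commits the same elision: it infers from $(P_q,P_q')=1$ that $P_q$ has three distinct roots \emph{in} $\mathbb{Z}/q\mathbb{Z}$, whereas coprimality only yields distinct roots in $\overline{\mathbb{F}}_q$.
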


\begin{proof}
We will work with the form $y^2=x^3-2^43^3D^3$ which is isomorphic to $E(D^3)$. With the usual notation for Tate's algorithm, we have $a_1=a_3=a_2=a_4=0$, $a_6=-2^43^3D^3$, $b_4=b_8=0$ and $b_6=-2^63^3D^3$. For a bad prime $q$, we have $q\mid a_1, a_2$, $q^2\mid a_3, a_4$ and $q^3\mid a_6$. Let $P_q$ be the polynomial
$$P_q(T)=T^3+\frac{a_6}{q^3}.$$
Then for $q=3$, we have $P_3'(T)=3T^2\equiv 0\bmod 3$ so $P_3(T)$ has a triple root in $\mathbb{Z}/3\mathbb{Z}$. Therefore, $c_3=3$ and $\mathrm{ord}_2(c_3)=0$.  If  $q$ is a prime factor of $D$, then $(P_q(T),P_q'(T))=(T^3+\frac{a_6}{q^3}, 3T^2)=1$ in $\mathbb{Z}/q\mathbb{Z}[T]$, since $3\nmid D$. So $P_q(T)$ has $3$ distinct roots in $\mathbb{Z}/q\mathbb{Z}$. Hence, $c_q=4$ and $\mathrm{ord}_2(c_q)=2$. 

Also, $E(D^3)[2^\infty](\mathbb{Q})=\{\mathcal{O}\}$ since the equation $4x^3-3^3D^3=0$ clearly has no rational solution. 
\end{proof}

Thus \eqref{c2} indeed predicts
\begin{align*}\mathrm{ord}_2\left(L^{\text{(alg)}}\left(E(D^3),1\right)\right)&=\mathrm{ord}_2\left((\Sha\left(E(D^3)\right)[2^\infty]\right)+2 k(D)\\
&\geqslant 2 k(D).
\end{align*}

Next, we consider the case when $\lambda=D^2$, for $D>1$ a cube-free integer, so that $E(D^2)$ is a cubic twist of $E$. We remark that $E(D^2)$ is isomorphic to the curve $x^3+y^3=D$ which is a cubic twist of the Fermat curve $x^3+y^3=1$. The primes of bad reduction for $E(D^2)$ are again $3$ and the primes dividing $D$, since the discriminant of $E(D^2)$ is $-27D^4$.

\begin{lem} Let $D>1$ be an odd, cube-free integer such that $D\equiv 1\bmod 9$ and $D$ is a product of primes congruent to $1$ modulo $3$. Then $$\mathrm{ord}_3\left(\frac{\prod\limits_{q\text{ bad}}c_q}{\#(E(D^2)(\mathbb{Q}))^2}\right)=k(D)+1,$$
where $k(D)$ is the number of distinct prime factors of $D$.
\end{lem}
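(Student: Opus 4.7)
The plan is to apply Tate's algorithm at each prime of bad reduction for $E(D^2)$ and to verify that the rational torsion has trivial $3$-part. Passing from $y^2 = 4x^3 - 27 D^2$ to the standard form via $y = 2Y + D$ (valid since $D$ is odd) yields the integral Weierstrass model $E(D^2) \colon Y^2 + D Y = X^3 - 7 D^2$, with $a_3 = D$, $a_6 = -7 D^2$, $b_6 = -27 D^2$, and discriminant $-3^9 D^4$. The bad primes are therefore exactly $3$ and the primes dividing $D$.

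At a prime $q \mid D$ with $q \neq 3$, one checks $v_q(b_6) = 2 v_q(D)$, so Tate's algorithm terminates in step 5 (Kodaira type $IV$, when $v_q(D) = 1$) or step 7 (type $IV^*$, when $v_q(D) = 2$). In each case $c_q = 3$, as the relevant quadratic polynomial appearing in Tate's criterion has discriminant proportional to $-27$ and hence splits over $\mathbb{F}_q$ under the hypothesis. At $q = 3$, since $3 \nmid D$, the singular point of the reduction mod $3$ is $(0, 1)$; I would shift $Y \mapsto Y + 1$, then $Y \mapsto Y + 3$ and $X \mapsto X + 3$ to drive the algorithm past step 5 (with $v_3(b_6) = 3$), through step 6 (where the cubic $P(T)$ acquires a triple root at $T \equiv 1 \pmod 3$), and into step 8. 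The hypothesis $D \equiv 1 \pmod 9$ forces $v_3(1 - D^2) \geq 2$, which in turn makes $v_3(a_{6,4}) \geq 1$; consequently the quadratic $Q(Y) = Y^2 + a_{3,2} Y - a_{6,4}$ reduces to $Y(Y + D)$ over $\mathbb{F}_3$, splitting with distinct roots in $\mathbb{F}_3$. This identifies Kodaira type $IV^*$ at $3$ with $c_3 = 3$, contributing one further factor of $3$.

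For the torsion, the 3-torsion points $(3, \pm 9)$ of $E = X_0(27)$ map under the cubic twist isomorphism (which is defined only over $\mathbb{Q}(\sqrt[3]{D})$) to points with coordinates involving $D^{2/3}$, which are not $\mathbb{Q}$-rational for $D > 1$ cube-free; the remaining 3-torsion of $E$ lies over $K = \mathbb{Q}(\sqrt{-3})$ and similarly does not descend. Hence $E(D^2)(\mathbb{Q})$ has trivial $3$-part, and combining with the Tate calculation gives $\mathrm{ord}_3\bigl(\prod_{q} c_q / \#(E(D^2)(\mathbb{Q}))^2\bigr) = 1 + k(D)$, as required. The main obstacle is the Tate computation at $q = 3$: the multiple successive shifts needed to reach step 8, and the precise moment at which the hypothesis $D \equiv 1 \pmod 9$ must be invoked to control the residual divisibility of $a_{6,4}$, make this the delicate part of the argument.
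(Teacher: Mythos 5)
Your proposal follows essentially the same route as the paper: run Tate's algorithm at $3$ and at each prime dividing $D$, then check that the rational torsion has trivial $3$-part. You work with the model $Y^2+DY=X^3-7D^2$ where the paper uses $y^2=x^3-2^4 3^3D^2$, and your case split at $q\mid D$ (type $\mathrm{IV}$ when $v_q(D)=1$, type $\mathrm{IV}^*$ when $v_q(D)=2$) is in fact more careful than the paper's, which only writes out the condition $q^3\nmid a_6$ valid for $v_q(D)=1$. Two details deserve comment. First, your step at $q=3$ contains an incorrect intermediate claim: writing $D=1+9k$, after your shifts one finds $a_{6,4}=-k(2+7k)$, which is a $3$-adic unit whenever $k\equiv 2\bmod 3$ (e.g.\ $D=19$ gives $a_{6,4}=-32$), so $v_3(a_{6,4})\geqslant 1$ does \emph{not} follow from $D\equiv 1\bmod 9$ and the step-8 quadratic does not always reduce to $Y(Y+D)$. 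The conclusion survives anyway, since the discriminant $a_{3,2}^2+4a_{6,4}=(1+k)^2-4k(2+7k)\equiv 1\bmod 3$ for every $k$, so the quadratic always has two distinct roots in $\mathbb{F}_3$ and $c_3=3$; you should argue via this discriminant rather than via $v_3(a_{6,4})$. Second, at a prime $q\mid D$ the splitting of the quadratic with discriminant $-27\cdot(\text{unit})^2$ requires $q\equiv 1\bmod 3$, which is not implied by the hypothesis ``odd, cube-free, $D\equiv 1\bmod 9$'' alone (take $D=55$); your phrase ``splits under the hypothesis'' glosses over this, but the paper's own proof has exactly the same gap (its assertion $\left(\frac{-3}{q}\right)=(-1)^{q-1}=1$ is false for $q\equiv 2\bmod 3$), and the issue is harmless in the intended application since cubic-special primes are $\equiv 1\bmod 27$.
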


\begin{proof}
We will work with the form $y^2=x^3-2^43^3D^2$ which is isomorphic to $E(D^2)$. With the usual notation for Tate's algorithm, we have $a_1=a_3=a_2=a_4=0$, $a_6=-2^43^3D^2$, $b_4=b_8=0$ and $b_6=-2^63^3D^2$. Let $q$ be a prime of bad reduction for $E$. If $q$ is a prime factor of $D$, then we have $q\mid a_1, a_2$, $q^2\mid a_3, a_4$ and $q^3\nmid a_6$ hence the type is IV (see \cite[p. 49]{tate}) and $c_{q}=3$ or $1$. However, the polynomial $T^2+\frac{2^43^3D^2}{q^2}$ has roots in $\mathbb{Z}/q\mathbb{Z}$ since $\left(\frac{-3}{q}\right)=(-1)^{q-1}\left(\frac{q}{3}\right)=1$ and so $-\frac{2^43^3D^2}{q^2}$ is a square $\bmod$ $q$. It follows that $c_q=3$ and $\mathrm{ord}_3(c_q)=1$. Otherwise, $q=3$ and we have $3\mid a_1, a_2$, $3^2\mid a_3, a_4$ and $3^3\nmid a_6$. Let $P_3$ be the polynomial
$$P_3(T)=T^3+\frac{a_6}{3^3}.$$
Then $P_3'(T)=3T^2\equiv 0\bmod 3$ so $P_3(T)$ has a triple root in $\mathbb{Z}/3\mathbb{Z}$. After the change of variables $x=X+3D$ the triple root is $0$, and we have $a_1=a_3=0$, $a_2=3^2D$, $a_4=3^3D^2$, $a_6=3^3D^2(D-2^4)\equiv 3 \bmod 9$. So $Y^2-\frac{a_6}{3^4}=Y^2-\frac{D^2(D-2^4)}{3}\equiv Y^2-1\equiv 0\bmod 3$ has distinct roots in $\mathbb{Z}/3\mathbb{Z}$. Hence the type is IV* (see \cite[p. 51]{tate}) and $c_3=3$, so that $\mathrm{ord}_3(c_3)=1$.

Furthermore, by \cite[Exercise 10.19]{sil}, we have \mbox{$E(D^2)(\mathbb{Q})_{\text{tors}} = \{\mathcal{O}\}$} for $D> 1$.
\end{proof}

Thus \eqref{c2} predicts
\begin{align*}\mathrm{ord}_3\left(L^{\text{(alg)}}\left(E(D^2),1\right)\right)&\geqslant \mathrm{ord}_3\left((\Sha\left(E(D^2)\right)[3^\infty]\right)+ k(D) +1\\
&\geqslant k(D) +1.
\end{align*}

\section{Quadratic Twists.}\label{section2.2}
Let  $ K = \mathbb{Q}(\sqrt{-3})$, and write $\boldsymbol{\mu}_K$ for the group of roots of unity in $K$. We fix once and for all an embedding of $K$ into $\mathbb{C}$. In general, if $\lambda$ is a non-zero element of $\mathcal{O}_K$ which is prime to $\#(\boldsymbol{\mu}_K)=6$, we let $\psi_{\lambda}:=\psi_{E(\lambda)/K}$ be the Gr\"{o}ssencharacter of $E(\lambda)$ over $K$ with conductor $\mathfrak{f}$, and let $\mathfrak{g}$ denote some integral multiple of $\mathfrak{f}$. Let $S$ be the set of primes of $K$ dividing $\mathfrak{g}$. We consider the (usually) imprimitive Hecke $L$-series
$$L_S(\overline{\psi}_\lambda,s)=\sum\limits_{(\mathfrak{a},\mathfrak{g})=1}\frac{\overline{\psi}_\lambda(\mathfrak{a})}{(\rm{N}\mathfrak{a})^s}$$
of $\overline{\psi}_{\lambda}$ (the complex conjugate of $\psi_{\lambda}$). It can be defined by the Euler product
$$L_S(\overline{\psi}_\lambda,s)=\prod\limits_{(v,\mathfrak{g})=1}\left(1-\frac{\overline{\psi}_\lambda(v)}{(\mathrm{N} v)^s}\right)^{-1},$$
and if we replace $\mathfrak{g}$ by $\mathfrak{f}$ in the definition, we obtain the primitive Hecke $L$-function $L(\overline{\psi}_\lambda,s)$. In particular, we have
$$L(E(\lambda),1)=L(\overline{\psi}_\lambda,1).$$

Recall that for any complex lattice $L$ and $z, s\in\mathbb{C}$, we can define the Kronecker--Eisenstein series
$$H_1(z,s,L):=\sum\limits_{w\in L}\frac{\overline{z}+\overline{w}}{\:\: |z+w|^{2s}},$$
where the sum in taken over all $w\in L$, except $-z$ if $z\in L$. This series converges for $\rm{Re}(s)>\frac{3}{2}$, and it has an analytic continuation to the whole complex $s$-plane \cite[Theorem 1.1]{gol-sch}. The non-holomorphic Eisenstein series $\mathcal{E}_1^*(z,L)$ is defined by 
$$\mathcal{E}_1^*(z,L):=H_1(z,1,L).$$

Let $\Omega_{\lambda}=\frac{\Omega}{\sqrt[6]{\lambda}}\in \mathbb{C}^\times$, where $\sqrt[6]{\lambda}$ denotes the real root and $\Omega$ is the least positive real period of the N\'{e}ron differential of any global Weierstrass minimal equation for $E$. We write $\mathcal{L}_{\lambda}$ for the period lattice of the curve $E(\lambda)$ over $\mathbb{C}$, and write $\mathcal{L}$ for that of $E$.

Since $\mathfrak{g}$ is a multiple of $\mathfrak{f}$, it follows from \cite[Lemma 3]{coa-wil} that $K(E(\lambda)_{\mathfrak{g}})$, the extension of $K$ obtained by adjoining the coordinates of all $\mathfrak{g}$-division points of $E(\lambda)$ to $K$, is isomorphic to $K(\mathfrak{g})$, the ray class field of $K$ modulo $\mathfrak{g}$. We fix, once and for all,  a set $\mathcal{B}$ of integral ideals of $K$ prime to $\mathfrak{g}$ such that
$$\mathrm{Gal}(K(\mathfrak{g})/K)=\{\sigma_{\mathfrak{b}}\; : \; \mathfrak{b}\in \mathcal{B}\},$$
where the Artin symbol $\sigma_\mathfrak{b}=\left(\mathfrak{b}, K(\mathfrak{g})/K\right)$ of $\mathfrak{b}$ runs over $\mathrm{Gal}\left( K(\mathfrak{g})/K\right)$ precisely once as $\mathfrak{b}$ runs over $\mathcal{B}$. Fix a generator $g$ of $\mathfrak{g}$, so that $\mathfrak{g}=g\mathcal{O}_K$. The next result is due to Goldstein and Schappacher \cite[Proposition 5.5]{gol-sch}.

\begin{lem}\label{lem2.2.1} For all non-zero $\lambda\in\mathcal{O}_K$, we have
$$L_S(\overline{\psi}_{\lambda},s)=\frac{|\Omega_\lambda/g|^{2s}}{\overline{\Omega_\lambda/g}}\sum\limits_{\mathfrak{b}\in\mathcal{B}}H_1\left(\psi_{\lambda}(\mathfrak{b})\frac{\Omega_{\lambda}}{g}, s, \mathcal{L}_{\lambda}\right).$$
\end{lem}

\begin{proof} The Artin map gives an isomorphism
$$\left(\mathcal{O}_K/\mathfrak{g}\right)^\times/\widetilde{\boldsymbol{\mu}}_K \xrightarrow{\sim} \mathrm{Gal}\left(K\left(E(\lambda)_{\mathfrak{g}}\right)/K\right)$$
where $\widetilde{\boldsymbol{\mu}}_K$ denotes the image of the group $\boldsymbol{\mu}_K$ under reduction modulo $\mathfrak{g}$. Moreover, it is clear from the choice of $\lambda$ that the map from $\boldsymbol{\mu}_K$ to $\widetilde{\boldsymbol{\mu}}_K$ is an isomorphism. Hence, the principal ideal $\mbox{$(\psi_{\lambda}(\mathfrak{b})+a)$}$ runs over all integral ideals of $K$ prime to $\mathfrak{g}$ precisely once as $\mathfrak{b}$ runs over $\mathcal{B}$ and $a$ runs over $\mathfrak{g}$. It follows that
\begin{align*}L_S(\overline{\psi}_{\lambda},s)=\sum\limits_{\mathfrak{b}\in\mathcal{B}}\sum\limits_{a\in\mathfrak{g}}\frac{\overline{\psi}_{\lambda}((\psi_{\lambda}(\mathfrak{b})+a))}{|\psi_{\lambda}(\mathfrak{b})+a|^{2s}}.
\end{align*}
Note that since $a\in \mathfrak{g}$, we can write 
$$\psi_{\lambda}(\mathfrak{b})+a=(\psi_{\lambda}(\mathfrak{b}))(1+a/\psi_{\lambda}(\mathfrak{b}))=\mathfrak{b}(1+a/\psi_{\lambda}(\mathfrak{b}))$$
where $\mathrm{ord}_v(a/\psi_{\lambda}(\mathfrak{b}))\geqslant \mathrm{ord}_v(\mathfrak{f})$ for each prime $v\mid \mathfrak{f}$, so that
$$\psi_\lambda(\psi_{\lambda}(\mathfrak{b})+a)=\psi_\lambda(\mathfrak{b})(1+a/\psi_\lambda(\mathfrak{b}))=\psi_\lambda(\mathfrak{b})+a.$$
Hence
$$L_S(\overline{\psi}_{\lambda},s)=\sum\limits_{\mathfrak{b}\in\mathcal{B}}\sum\limits_{a\in\mathfrak{g}}\frac{\overline{\psi_{\lambda}(\mathfrak{b})+a}}{|\psi_{\lambda}(\mathfrak{b})+a|^{2s}}=\sum\limits_{\mathfrak{b}\in\mathcal{B}}H\left(\psi_\lambda(\mathfrak{b}),s,\mathfrak{g}\right).$$
We can renormalise the right hand side to obtain the result.
\end{proof}
The following is a well-known fact from, for example, \cite[Theorem 2.1]{gol-sch}.
\begin{fact}\label{fact1} For all $\mathfrak{b}\in \mathcal{B}$, we have
$$\mathcal{E}_1^*\left(\frac{\Omega_{\lambda}}{g}, \mathcal{L}_{\lambda}\right)\in K(\mathfrak{g})$$
and
\begin{equation}\label{eq2.2.1}\mathcal{E}_1^*\left(\frac{\Omega_{\lambda}}{g}, \mathcal{L}_{\lambda}\right)^{\sigma_\mathfrak{b}}=\mathcal{E}_1^*\left(\frac{\psi(\mathfrak{b})\Omega_{\lambda}}{g}, \mathcal{L}_{\lambda}\right).
\end{equation}
\end{fact}

Now, we concentrate on the case where $E(\lambda)$ is a quadratic twist of $E$. 
\begin{defn}\label{def2.2.3} We say a rational prime $p$ is a special split prime if $p$ splits completely in $L=K(x(E[4]))$, the field obtained by adjoining to $K$ the $x$-coordinates of all non-zero points in $E[4]$.
\end{defn}
\noindent
In addition, it can be shown that a rational prime $p$ is a special split prime if and only if it splits in $K$, and $\psi(\mathfrak{p})\equiv \pm 1\bmod 4$ for both of the primes $\mathfrak{p}$ of $K$ above $p$.  Moreover, $L=K(\boldsymbol{\mu}_{4}, \sqrt[3]{2})$ (see Lemma \ref{lem4} of Appendix A).

For the remainder of this section, we assume that $D\in \mathcal{O}_K$ is such that $D\equiv 1\bmod 3$ and $(D)=\mathfrak{p}_1\cdots \mathfrak{p}_n$ is a square-free product of prime ideals $\mathfrak{p}_j$ of $K$ above special split primes. In addition, we pick the sign $\pi_j$ of the generator of $\mathfrak{p}_j$ so that $\pi_j\equiv 1\bmod 4$, and set $D=\pi_1\cdots \pi_n$ and $S=\{\pi_1,\ldots ,\pi_n\}$. The sign will not matter since we are most interested in the case when $D$ is an integer. Given $\alpha=(\alpha_1,\ldots \alpha_n)$ with $\alpha_j\in \{0, 1\}$ for all $j=1,\ldots , n$, let $D_\alpha\in K$ be of the form $D_\alpha=\pi_1^{\alpha_1}\cdots \pi_n^{\alpha_n}$. Note that for any integers $k_j\geqslant0$ and $D_{\alpha '}=\pi_1^{\alpha_1+2 k_1}\cdots \pi_n^{\alpha_n+2 k_n}$,  we have
$$E(D_\alpha^{3})\cong E(D_{\alpha'}^{3})$$
over $K$, hence we may consider $\alpha=(\alpha_1,\ldots , \alpha_n)\in\{0, 1\}^n$ as an element of $(\mathbb{Z}/2\mathbb{Z})^n$. Given $\alpha\in (\mathbb{Z}/2\mathbb{Z})^n$, let $n_\alpha$ be the number of primes dividing $D_\alpha$ and define $S_\alpha=\{\pi_j:\pi_j\mid D_\alpha\}$.

Let $C(A/\mathbb{Q})$ be the conductor of an elliptic curve $A$ over $\mathbb{Q}$. Recall that if $\mathrm{End}_{\overline{\mathbb{Q}}}(A)\otimes_{\mathbb{Z}}\mathbb{Q}=K$, an imaginary quadratic field, we have
\begin{equation}\label{f1}C(A/\mathbb{Q})=\mathrm{N}_{K/\mathbb{Q}}\mathfrak{f}_A\cdot d_K,
\end{equation}
where $\mathfrak{f}_A$ is the conductor of $\psi_{A/K}$ and $d_K$ is the absolute value of the discriminant of $K/\mathbb{Q}$. In particular, $C(E/\mathbb{Q})=27$, and so the conductor of $\psi$ is $3\mathcal{O}_K$. It can be verified using this result and Tate's algorithm that the conductor of $\psi_{D^3}$ is $\mathfrak{f}=3D\mathcal{O}_K$. It follows that $K\left(E(D^3)_{\mathfrak{f}}\right)$ is isomorphic to  $K(\mathfrak{f})$, the ray class field of $K$ modulo $\mathfrak{f}$. Hence the Artin map gives an isomorphism
$$\left(\mathcal{O}_K/3D\mathcal{O}_K\right)^\times/\widetilde{\boldsymbol{\mu}}_6 \xrightarrow{\sim} \mathrm{Gal}\left(K\left(E(D^3)_{\mathfrak{f}}\right)/K\right)$$
where $\widetilde{\boldsymbol{\mu}}_6$ denotes the image of $\boldsymbol{\mu}_K=\boldsymbol{\mu}_6$ under reduction modulo $\mathfrak{f}$.
Note that since $3$ and $D$ are coprime and $3$ ramifies in $K$, we have an exact sequence
$$0\to \left(\mathcal{O}_K/D\mathcal{O}_K\right)^\times\to \left(\mathcal{O}_K/3D\mathcal{O}_K\right)^\times/\widetilde{\boldsymbol{\mu}}_6\to  \left(\mathcal{O}_K/3\mathcal{O}_K\right)^\times/\boldsymbol{\mu}_6 \to 0,$$
so that $\left(\mathcal{O}_K/3D\mathcal{O}_K\right)^\times/\widetilde{\boldsymbol{\mu}}_6 \cong \left(\mathcal{O}_K/D\mathcal{O}_K\right)^\times.$

Setting $s=1$  and $g=3D$ in Lemma \ref{lem2.2.1} and applying \eqref{eq2.2.1} immediately yields:
\begin{cor}\label{cor2.2.4} For any $\alpha \in (\mathbb{Z}/2\mathbb{Z})^n$, we have
$$\frac{3D}{\Omega_{D_\alpha^3}}L_S(\overline{\psi}_{D_\alpha^3}, 1)=\mathrm{Tr}_{K(\mathfrak{f})/K}\left(\mathcal{E}_1^*\left(\frac{\Omega_{D_\alpha^3}}{3D}, \mathcal{L}_{D_\alpha^3}\right)\right).$$ 
\end{cor}

We wish to find $\mathrm{ord}_2\left(L^{(\text{alg})}(\overline{\psi}_{D^3},1)\right)$. In order to do this, we consider the following sum of imprimitive Hecke $L$-series.
\begin{defn}\label{def2.2.5}
Let
$$\Phi_{D^3}=\sum\limits_{\alpha\in (\mathbb{Z}/2\mathbb{Z})^n}\frac{L_S(\overline{\psi}_{D_\alpha^3}, 1)}{\Omega.}$$
\end{defn}

Using Corollary \ref{cor2.2.4}, we can write this sum in the following way.

\begin{thm}\label{thm2.1}We have
$$\Phi_{D^3}=2^n\mathrm{Tr}_{K(\mathfrak{f})/\mathcal{J}}\left(\frac{1}{3D}\mathcal{E}_1^*\left(\frac{\Omega}{3D}, \mathcal{L}\right)\right),$$
where $\mathcal{J}=\mathbb{Q}\left(\sqrt{-3}, \sqrt{\pi_1}, \ldots , \sqrt{\pi_n}\right)$. 
\end{thm}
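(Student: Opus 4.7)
The plan is to apply Corollary~\ref{cor3} separately to each twist $E(D_\alpha^3)$, rewrite each Kronecker--Eisenstein value in terms of the untwisted lattice $\mathcal{L}$, and then interchange the sum over $\alpha$ with the sum over $\mathcal{B}$. From the isomorphism $E(D_\alpha^3)\to E$ given by $(X,Y)\mapsto(D_\alpha^{-1}X,\,D_\alpha^{-3/2}Y)$, a direct calculation gives $\Omega_{D_\alpha^3}=\Omega/\sqrt{D_\alpha}$ and $\mathcal{L}_{D_\alpha^3}=D_\alpha^{-1/2}\mathcal{L}$. Combined with the homogeneity $\mathcal{E}_1^*(\mu z,\mu L)=\mu^{-1}\mathcal{E}_1^*(z,L)$ for any $\mu\in\mathbb{C}^\times$ (which one reads off directly from the defining series of $H_1$), Corollary~\ref{cor3} becomes
$$\frac{L_S(\overline{\psi}_{D_\alpha^3},1)}{\Omega}\;=\;\frac{1}{3D}\sum_{\mathfrak{b}\in\mathcal{B}}\mathcal{E}_1^*\!\left(\frac{\psi_{D_\alpha^3}(\mathfrak{b})\,\Omega}{3D},\,\mathcal{L}\right).$$

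Next I will use the standard fact that, on ideals coprime to $3D$, the Gr\"ossencharacters are related by $\psi_{D_\alpha^3}=\chi_{D_\alpha}\cdot\psi$, where $\chi_{D_\alpha}\colon\mathcal{B}\to\{\pm1\}$ is the Kummer character cutting out the quadratic extension $K(\sqrt{D_\alpha})/K$. Since $H_1$ is odd in $z$ one has $\mathcal{E}_1^*(-z,\mathcal{L})=-\mathcal{E}_1^*(z,\mathcal{L})$, and applying Fact~\ref{fact1} to $\psi$ itself (the case $\lambda=1$, with modulus $3D\mathcal{O}_K$), the $\mathfrak{b}$-summand rewrites as
$$\chi_{D_\alpha}(\mathfrak{b})\cdot\mathcal{E}_1^*\!\left(\frac{\Omega}{3D},\mathcal{L}\right)^{\!\sigma_\mathfrak{b}}.$$
Summing over $\alpha\in(\mathbb{Z}/2\mathbb{Z})^n$ and swapping the order of summation, the coefficient of $\mathcal{E}_1^*\!\left(\Omega/(3D),\mathcal{L}\right)^{\sigma_\mathfrak{b}}$ is
$$\sum_{\alpha\in(\mathbb{Z}/2\mathbb{Z})^n}\chi_{D_\alpha}(\mathfrak{b})\;=\;\prod_{j=1}^{n}\bigl(1+\chi_{\pi_j}(\mathfrak{b})\bigr),$$
which equals $2^n$ whenever $\chi_{\pi_j}(\mathfrak{b})=1$ for every $j$, and vanishes otherwise.

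Finally, the condition $\chi_{\pi_j}(\mathfrak{b})=1$ for all $j$ says exactly that $\sigma_\mathfrak{b}$ fixes every $\sqrt{\pi_j}$, i.e.\ $\sigma_\mathfrak{b}\in\mathrm{Gal}(\mathcal{R}(\mathfrak{f})/\mathcal{J})$; the hypothesis $\pi_j\equiv 1\bmod 4$ is what guarantees that each $K(\sqrt{\pi_j})/K$ has conductor dividing $\pi_j$, so $\mathcal{J}\subseteq\mathcal{R}(\mathfrak{f})$ and this subgroup is meaningful. The surviving $\mathfrak{b}$ therefore run over a system of representatives for $\mathrm{Gal}(\mathcal{R}(\mathfrak{f})/\mathcal{J})$, and pulling the scalar $1/(3D)\in\mathcal{J}$ inside the trace yields the desired identity. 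The main technical input is the twist identity $\psi_{D_\alpha^3}=\chi_{D_\alpha}\psi$ together with the identification of $\chi_{D_\alpha}$ with the Kummer character of $K(\sqrt{D_\alpha})/K$; both are standard consequences of the theory of complex multiplication, and granting them, the remainder is orthogonality plus the $\mathcal{E}_1^*$-homogeneity.
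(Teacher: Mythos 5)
Your proposal is correct and follows essentially the same route as the paper: both reduce to the identity $\frac{L_S(\overline{\psi}_{D_\alpha^3},1)}{\Omega}=\frac{1}{3D}\sum_{\mathfrak{b}}\left(\frac{D_\alpha}{\mathfrak{b}}\right)_2\mathcal{E}_1^*\left(\frac{\Omega}{3D},\mathcal{L}\right)^{\sigma_\mathfrak{b}}$ and then apply character orthogonality over $(\mathbb{Z}/2\mathbb{Z})^n$ to isolate the $\mathfrak{b}$ with $\sigma_\mathfrak{b}\in\mathrm{Gal}(\mathcal{R}(\mathfrak{f})/\mathcal{J})$. The only (cosmetic) difference is that you produce the sign $\left(\frac{D_\alpha}{\mathfrak{b}}\right)_2$ via the twist identity $\psi_{D_\alpha^3}=\chi_{D_\alpha}\psi$ and the oddness of $\mathcal{E}_1^*$, whereas the paper obtains it as $(D_\alpha^3)^{(\sigma_\mathfrak{b}-1)/6}$ from the Galois action on $D_\alpha^{1/2}$.
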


\begin{proof} We have for any $\alpha \in (\mathbb{Z}/2\mathbb{Z})^n$,
$$\frac{L_S(\overline{\psi}_{D_\alpha^3}, 1)}{\Omega_{D_\alpha^3}}=\frac{1}{3D}\sum\limits_{\mathfrak{b}\in\mathcal{B}}\mathcal{E}_1^*\left(\frac{\Omega_{D_\alpha^3}}{3D}, \mathcal{L}_{D_\alpha^3}\right)^{\sigma_\mathfrak{b}}$$
and $\Omega_{D_\alpha^3}=\frac{1}{D_\alpha^{1/2}}\Omega$, so
\begin{equation}\label{eq5}\frac{L_S(\overline{\psi}_{D_\alpha^3}, 1)}{\Omega}=\frac{1}{3D}\sum\limits_{\mathfrak{b}\in\mathcal{B}}(D_\alpha^{3})^{\frac{\sigma_{\mathfrak{b}}-1}{6}}\mathcal{E}_1^*\left(\frac{\Omega}{3D}, \mathcal{L}\right)^{\sigma_\mathfrak{b}}
\end{equation}
and
$$(D_\alpha^{3})^{\frac{\sigma_{\mathfrak{b}}-1}{6}}=\left(\frac{D_\alpha}{\mathfrak{b}}\right)_2\in\{\pm 1\},$$
where $\left(\frac{\phantom{a}}{\phantom{a}}\right)_2$ denotes the quadratic residue symbol.  Let $\epsilon_2(\cdot , \mathfrak{b}):  (\mathbb{Z}/2\mathbb{Z})^n\to \{\pm 1\}$ be the $1$-dimensional character defined by $\epsilon_2(\alpha , \mathfrak{b})= \left(\frac{D_\alpha}{\mathfrak{b}}\right)_2$.
Since any $1$-dimensional character is irreducible, considering its inner product with the trivial character gives
$$
\sum\limits_{\alpha\in (\mathbb{Z}/2\mathbb{Z})^n}\epsilon_2(\alpha, \mathfrak{b}) = \left\{ \begin{array}{ll}
 2^n &\mbox{ if $\left(\frac{D_\alpha}{\mathfrak{b}}\right)_2=1$ for all $\alpha\in (\mathbb{Z}/2\mathbb{Z})^n$} \\
 0 &\mbox{ otherwise.}
       \end{array} \right.
$$
Note that $\left(\frac{D_\alpha}{\mathfrak{b}}\right)_2=1$ for all $\alpha\in (\mathbb{Z}/2\mathbb{Z})^n$ if and only if $\left(\frac{\pi_j}{\mathfrak{b}}\right)_2=1$ for all $j=1,\ldots , n$. The result now follows by noting that $\left(\frac{\pi_j}{\mathfrak{b}}\right)_2=1$ for all $j=1,\ldots , n$ if and only if $\sigma_{\mathfrak{b}}\in\mathrm{Gal}(K(\mathfrak{f})/\mathcal{J})$ where $\mathcal{J}=\mathbb{Q}\left(\sqrt{-3}, \sqrt{\pi_1}, \ldots , \sqrt{\pi_n}\right)$. 
\end{proof}

We now make an explicit choice of $\mathcal{B}$. 

\begin{defn} 
Let $\mathcal{C}$ be a set of elements of $\mathcal{O}_K$ such that $c\in \mathcal{C}$ implies $-c\in \mathcal{C}$ and $c \bmod D$ runs over $\left(\mathcal{O}_K/D\mathcal{O}_K\right)^\times$ precisely once. Note that this is possible since $(2,D)=1$  by hypothesis. Furthermore, since $\mathrm{Gal}(K(\mathfrak{f})/K)$ is isomorphic to $\left(\mathcal{O}_K/D\mathcal{O}_K\right)^\times$, the Artin symbol $(c, K(\mathfrak{f})/K)$ runs over $\mathrm{Gal}(K(\mathfrak{f})/K)$ precisely once as $c$ varies in $\mathcal{C}$. In addition, we define
$$\mathcal{B}=\{(3c+D) \; : \; c\in \mathcal{C}\}$$
so that $3c+D\equiv 1 \bmod 3\mathcal{O}_K$ for all $c\in \mathcal{C}$ since $D\equiv 1\bmod 3$ by assumption. In particular, if $\mathfrak{b}=(3c+D)$ then we have  $\psi(\mathfrak{b})=3c+D$ since the conductor of $\psi$ is $3\mathcal{O}_K$. Finally, let
 $$V=\{c\in \mathcal{C} \;\; : \left(\frac{\pi_j}{\mathfrak{b}}\right)_2=1\ \text{ for all } j=1,\ldots, n, \text{ where }\mathfrak{b}=(3c+D)\},$$
where $\left(\frac{\phantom{a}}{\phantom{a}}\right)_2$ denotes the quadratic residue symbol. 
\end{defn}

Note that if $c\in V$ implies $-c\in V$ since
\begin{align*}\left(\frac{\pi_j}{\mathfrak{b}}\right)_2&= \left(\frac{3c+D}{\pi_j}\right)_2\ \mbox{\hspace{20pt} (since $\pi_j\equiv 1\bmod 4$)}\\
&= \left(\frac{3c}{\pi_j}\right)_2\\
&= \left(\frac{-3c}{\pi_j}\right)_2  \mbox{\hspace{40pt} (since $\left(\frac{-1}{\pi_j}\right)_2=1$)}.
\end{align*}

It is clear that we can also write Theorem \ref{thm2.1} in the following way.

\begin{cor} We have
 $$\Phi_{D^3}=2^n\sum\limits_{c\in V} \frac{1}{3D}\mathcal{E}_1^*\left(\frac{c\O}{D}+\frac{\Omega}{3}, \mathcal{L}\right).$$
\end{cor}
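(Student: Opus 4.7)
The corollary is a reformulation of Theorem \ref{thm2.1} via the explicit coset representatives in $\mathcal{B}$, so I expect the proof to reduce to bookkeeping. My plan is to expand the trace as a Galois sum, apply the reciprocity of Fact \ref{fact1}, and then simplify using the special form $\mathfrak{b}=(3c+D)$.

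First I would pull the scalar $1/(3D)$ through the trace (legitimate since it lies in $\mathcal{J}$) to write
$$\mathrm{Tr}_{\mathcal{R}(\mathfrak{f})/\mathcal{J}}\left(\frac{1}{3D}\mathcal{E}_1^*\left(\frac{\Omega}{3D}, \mathcal{L}\right)\right)=\frac{1}{3D}\sum_{\substack{\mathfrak{b}\in\mathcal{B}\\ \sigma_\mathfrak{b}|_{\mathcal{J}}=\mathrm{id}}}\mathcal{E}_1^*\left(\frac{\Omega}{3D}, \mathcal{L}\right)^{\sigma_\mathfrak{b}},$$
which is valid because $\{\sigma_\mathfrak{b}:\mathfrak{b}\in\mathcal{B}\}$ enumerates $\mathrm{Gal}(\mathcal{R}(\mathfrak{f})/K)$ exactly once. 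By Fact \ref{fact1}, each summand equals $\mathcal{E}_1^*\left(\psi(\mathfrak{b})\Omega/(3D), \mathcal{L}\right)$.

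Next I would use the special form $\mathfrak{b}=(3c+D)$: the construction of $\mathcal{B}$ was arranged so that $\mathfrak{b}\equiv 1\bmod 3\mathcal{O}_K$, and since the conductor of $\psi$ is $3\mathcal{O}_K$ this forces $\psi(\mathfrak{b})=3c+D$. Hence the lattice argument simplifies as
$$\frac{(3c+D)\Omega}{3D}=\frac{c\Omega}{D}+\frac{\Omega}{3}.$$

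Finally, the characterisation established at the end of the proof of Theorem \ref{thm2.1} identifies the condition $\sigma_\mathfrak{b}|_{\mathcal{J}}=\mathrm{id}$ with $\left(\frac{\pi_j}{\mathfrak{b}}\right)_2=1$ for every $j$, which is exactly the defining condition for $c\in V$. Reindexing the sum by $c$ in place of $\mathfrak{b}=(3c+D)$ — a bijection onto $V$ by construction of $\mathcal{B}$ from $\mathcal{C}$ — yields the formula. I do not anticipate any genuine obstacle: the entire argument is a direct unwinding of definitions, with the only mildly delicate point being the matching of the Galois-theoretic indexing set with $V$, which was already done inside Theorem \ref{thm2.1}.
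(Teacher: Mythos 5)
Your argument is correct and is exactly the unwinding the paper intends: the paper offers no proof beyond declaring the corollary a restatement of Theorem \ref{thm2.1}, and your steps (expanding the trace over $\mathrm{Gal}(\mathcal{R}(\mathfrak{f})/\mathcal{J})$ via the $\sigma_{\mathfrak{b}}$, applying Fact \ref{fact1} with $\psi(\mathfrak{b})=3c+D$, and matching the condition $\sigma_{\mathfrak{b}}|_{\mathcal{J}}=\mathrm{id}$ with the quadratic-residue condition defining $V$) are precisely the bookkeeping that justifies it. No gaps.
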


Using the relation between the Eisenstein series and the Weierstrass $\wp$-function, we can show:

\begin{thm}\label{thm2.2.9}
We have
$$\sum\limits_{c\in V}\mathcal{E}_1^*\left(\frac{c\O}{D}+\frac{\Omega}{3}, \mathcal{L}\right)=\frac{1}{2}\left(\sum\limits_{c\in V}\frac{9}{3-\wp\left(\frac{c\O}{D},\mathcal{L}\right)}\right)-\#(V).$$
\end{thm}

\begin{proof}
Let $$s_2(\mathcal{L})=\lim_{\substack{s\to 0 \\ s>0}}\sum_{w\in \mathcal{L}\backslash \{0\}}w^{-2}|w |^{-2s}.$$
Then by \cite[Proposition 1.5]{gol-sch}, we have
$$\mathcal{E}_1^*(z,\mathcal{L})=\zeta(z,\mathcal{L})-zs_2(\mathcal{L})-\overline{z}A(\mathcal{L})^{-1}.$$
Here, $\zeta(z,\mathcal{L})$ is the Weierstrass zeta function of $\mathcal{L}$ and $A(\mathcal{L}):=\frac{\overline{u}v-u\overline{v}}{2\pi i}$ where $(u,v)$ is a base of $\mathcal{L}$ over $\mathbb{Z}$ satisfying $\mathrm{Im}(v/u)>0$. Thus we have $A(\mathcal{L})=\frac{\Omega^2(\omega-\overline{\omega})}{2\pi i}=\frac{\sqrt{3}\Omega^2}{2\pi}$, and we can see that $s_2(\mathcal{L})=0$ on noting that $\omega\in \mathcal{L}$ which gives $\omega^{-2} s_2(\mathcal{L})=s_2(\mathcal{L})$. Hence
$$\mathcal{E}_1^*(z, \mathcal{L})=\zeta(z,\mathcal{L})-\frac{2\pi\overline{z}}{\sqrt{3}\Omega^2}.$$
Recall also that for $z_1, z_2\in \mathbb{C}$, we have an addition formula:
$$\zeta(z_1+z_2, \mathcal{L})=\zeta(z_1, \mathcal{L})+\zeta(z_2, \mathcal{L})+\frac{1}{2}\frac{\wp'(z_1, \mathcal{L})-\wp'(z_2, \mathcal{L})}{\wp(z_1, \mathcal{L})-\wp(z_2, \mathcal{L})}.$$
Applying this with $z_1=\frac{\Omega}{3}$, $z_2=\frac{c\O}{D}$, we get
\begin{align*}\sum\limits_{c\in V}\mathcal{E}_1^*&\left(\frac{c\O}{D}+\frac{\Omega}{3}, \mathcal{L}\right)=\sum\limits_{c\in V}\left(\zeta\left(\frac{c\O}{D}+\frac{\Omega}{3}, \mathcal{L}\right)-\left(\frac{\overline{c}\Omega}{\overline{D}}+\frac{\Omega}{3}\right)\frac{2\pi}{\sqrt{3}\Omega^2}\right)\\
&=\sum\limits_{c\in V}\left(\zeta\left(\frac{\Omega}{3}, \mathcal{L}\right)+\zeta\left(\frac{c\O}{D}, \mathcal{L}\right)+\frac{1}{2}\frac{\wp'(\frac{\Omega}{3}, \mathcal{L})-\wp'(\frac{c\O}{D}, \mathcal{L})}{\wp(\frac{\Omega}{3}, \mathcal{L})-\wp(\frac{c\O}{D}, \mathcal{L})}-\left(\frac{\overline{c}\Omega}{\overline{D}}+\frac{\Omega}{3}\right)\frac{2\pi}{\sqrt{3}\Omega^2}\right).
\end{align*}
Next, we use the key property that, if $c\in V$, then also $-c\in V$. Since $\zeta(z,\mathcal{L})$ and $\wp'(z,\mathcal{L})$ are odd functions, and $\wp(z,\mathcal{L})$ is an even function, it follows that
\begin{align*}
\sum\limits_{c\in V}\mathcal{E}_1^*\left(\frac{c\O}{D}+\frac{\Omega}{3}, \mathcal{L}\right)&=\left(\sum\limits_{c\in V}\frac{1}{2}\frac{\wp'\left(\frac{\Omega}{3},\mathcal{L}\right)}{\wp\left(\frac{\Omega}{3},\mathcal{L}\right)-\wp\left(\frac{c\O}{D},\mathcal{L}\right)}\right)+\#(V)\left(\zeta\left(\frac{\Omega}{3},\mathcal{L}\right)-\frac{2\pi}{3\sqrt{3}\Omega}\right).
\end{align*}

By applying formulae (3.2) and (3.3) of \cite[p. 126]{stephens}, we obtain
\begin{equation}\label{eq2.2.4}\zeta(z+1,\mathcal{O}_K)=\zeta(z,\mathcal{O}_K)+\frac{2\pi}{\sqrt{3}}, \;\;\;\zeta(z+\omega,\mathcal{O}_K)=\zeta(z,\mathcal{O}_K)+\frac{2\pi}{\sqrt{3}}\overline{\omega}.
\end{equation}
Letting $z=-\frac{1}{3}$ in \eqref{eq2.2.4} gives
$$\zeta\left(\frac{2}{3}, \mathcal{O}_K\right)+\zeta\left(\frac{1}{3}, \mathcal{O}_K\right)=\frac{2\pi}{\sqrt{3}}.$$
But we have $\zeta\left(\Omega z, \mathcal{L}\right)=\frac{1}{\Omega}\zeta\left(z, \mathcal{O}_K\right)$, so
\begin{equation}\label{eq2.2.5}\zeta\left(\frac{2\O}{3}, \mathcal{L}\right)+\zeta\left(\frac{\Omega}{3}, \mathcal{L}\right)=\frac{2\pi}{\sqrt{3}\Omega}.
\end{equation}
On the other hand, we have
$$\zeta(2z, \mathcal{L})=2\zeta(z,\mathcal{L})+\frac{\wp''(z, \mathcal{L})}{2\wp'(z,\mathcal{L})},$$
and by differentiating the equation $\wp'(z,\mathcal{L})^2=4\wp(z,\mathcal{L})^3-3^3$, we get $\wp''(z,\mathcal{L})=6\wp(z,\mathcal{L})^2$. Also, by computation we get
$$\wp\left(\frac{\Omega}{3},\mathcal{L}\right)=3, \;\; \wp'\left(\frac{\Omega}{3},\mathcal{L}\right)=9,$$ thus
\begin{equation}\label{eq2.2.6}\zeta\left(\frac{2\O}{3}, \mathcal{L}\right)-2\zeta\left(\frac{\Omega}{3}, \mathcal{L}\right)=\frac{\wp''\left(\frac{\Omega}{3}, \mathcal{L}\right)}{2\wp'\left(\frac{\Omega}{3},\mathcal{L}\right)}=\frac{6\wp^2\left(\frac{\Omega}{3}, \mathcal{L}\right)}{2\wp'\left(\frac{\Omega}{3},\mathcal{L}\right)}=3.
\end{equation}
Now, solving \eqref{eq2.2.5} and \eqref{eq2.2.6} gives
$$\zeta\left(\frac{\Omega}{3}, \mathcal{L}\right)=\frac{2\pi}{3\sqrt{3}\Omega}-1.$$
Hence
$$\sum\limits_{c\in V}\mathcal{E}_1^*\left(\frac{c\O}{D}+\frac{\Omega}{3}, \mathcal{L}\right)=\left(\sum\limits_{c\in V}\frac{1}{2}\frac{\wp'\left(\frac{\Omega}{3},\mathcal{L}\right)}{\wp\left(\frac{\Omega}{3},\mathcal{L}\right)-\wp\left(\frac{c\O}{D},\mathcal{L}\right)}\right)-\#(V).$$
Substituting the values $\wp\left(\frac{\Omega}{3},\mathcal{L}\right)=3$ and $\wp'\left(\frac{\Omega}{3},\mathcal{L}\right)=9$ again gives the result.
\end{proof}

Now we prove the following integrality result of the Eisenstein series.
\begin{cor}\label{cor1}
For $n\geqslant 1$, we have
$$\mathrm{ord}_2\left(\sum\limits_{c\in V}\mathcal{E}_1^*\left(\frac{c\O}{D}+\frac{\Omega}{3}, \mathcal{L}\right)\right)\geqslant 0.$$ 
\end{cor}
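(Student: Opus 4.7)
My plan is to reduce the statement to the identity of Theorem \ref{thm2} and then verify the $2$-adic integrality of the resulting expression. By that theorem,
$$\sum_{c\in V}\mathcal{E}_1^*\left(\tfrac{c\Omega}{D}+\tfrac{\Omega}{3},\mathcal{L}\right)=\frac{1}{2}\sum_{c\in V}\frac{9}{3-\wp(c\Omega/D,\mathcal{L})}-\#(V),$$
and since the second piece is a rational integer, I need only show that the first piece is $2$-integral. To absorb the factor of $\tfrac{1}{2}$, I invoke the observation noted just before the corollary defining $\Phi_{D^3}$, namely that $c\in V$ implies $-c\in V$; since $\wp$ is even, the terms pair off, and the first piece becomes $\sum_{c\in V/\{\pm 1\}}9/(3-\wp(c\Omega/D,\mathcal{L}))$.

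The heart of the argument is to show that for every $c\in V$ the element $3-\wp(c\Omega/D,\mathcal{L})$ is a unit at every prime $\mathfrak{P}$ of $\overline{\mathbb{Q}}$ above $2$. First I note that $E$ has good reduction at $2$: the model $Y^2+Y=X^3-7$ has discriminant $-3^9$, coprime to $2$. This immediately implies that $\wp(c\Omega/D,\mathcal{L})$, the $x$-coordinate (in the form $y^2=4x^3-27$) of the nontrivial $D$-torsion point $P$ attached to $c\Omega/D$, is $\mathfrak{P}$-integral. Next I recall that $\wp(\Omega/3,\mathcal{L})=3$, so an equivalence $\wp(c\Omega/D,\mathcal{L})\equiv 3\pmod{\mathfrak{P}}$ would mean $P$ and the nontrivial $3$-torsion point $Q$ corresponding to $\Omega/3$ have the same $x$-coordinate modulo $\mathfrak{P}$, forcing $P\equiv \pm Q\pmod{\mathfrak{P}}$.

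The argument then closes by injectivity of reduction: since $\gcd(D,6)=1$, both $P$ and $Q$ lie in the prime-to-$2$ torsion of $E$, on which the reduction map at $\mathfrak{P}$ is injective. Hence $P=\pm Q$ globally; but then the common order of these points must divide $\gcd(D,3)=1$, so both are trivial, contradicting $c\not\equiv 0\pmod{D}$ and the nontriviality of $Q$. Consequently $9/(3-\wp(c\Omega/D,\mathcal{L}))$ is $2$-integral for each $c$, the sum over $c\in V/\{\pm 1\}$ is $2$-integral, and subtracting the integer $\#(V)$ yields the claim. The only delicate point — rather than a genuine obstacle — is confirming good reduction of $E$ at $2$, so that injectivity of reduction on prime-to-$2$ torsion really is available; everything else is formal.
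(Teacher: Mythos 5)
Your proposal is correct and follows essentially the same route as the paper: both reduce to the identity of Theorem \ref{thm2}, show that each term $9/(3-\wp(c\Omega/D,\mathcal{L}))$ is a unit above $2$ by combining good reduction of $E$ at $2$ with the impossibility of $\widetilde{P}=\pm\widetilde{Q}$ for torsion points of coprime odd orders, and absorb the factor $\tfrac{1}{2}$ using the pairing $c\leftrightarrow -c$ together with the evenness of $\#(V)$. The only cosmetic difference is that you invoke injectivity of reduction on prime-to-$2$ torsion as a black box where the paper unwinds the same fact via the formal group.
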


\begin{proof}
Given $c\in V$, let $P$ be the point on $E: y^2=4x^3-3^3$ given by
$$x(P)=\wp\left(\frac{c\O}{D},\mathcal{L}\right), \;\;\; y(P)=\wp'\left(\frac{c\O}{D},\mathcal{L}\right)$$
and define 
$$\mathscr{M}(c,D)=\frac{9}{3-x(P)}.$$
Recall that $E$ has minimal Weierstrass form
$$E: Y^2+Y=X^3-7$$
which has discriminant $3^9$, so $E$ has good reduction at $2$ over $K$. This means that $\mathrm{ord}_2(X(P))\geqslant 0$ since $P$ is a torsion point on $E$ of order prime to $2$. 
Further, $x=X$ in the change of coordinates which gives the minimal Weierstrass form, and so we have
\begin{align*}\mathscr{M}(c,D)=\frac{9}{3-X(P)}.
\end{align*}
We claim that $\mathrm{ord}_2(3-X(P))=0$. Suppose for a contradiction that $\mathrm{ord}_2(3-X(P))>0$. Then let $Q=(3,4)$ be the point on $E$ which we know is a $3$-torsion, so that we have $\mathrm{ord}_2(X(Q)-X(P))>0$. Hence, under reduction modulo $2$, we would have $X(\widetilde{Q})=X(\widetilde{P})$ where $\widetilde{\phantom{a}}$ denotes reduction modulo $2$. Then we have $\widetilde{P}=\pm \widetilde{Q}$, so either $P-Q$ or $P+Q$ is in the kernel of the reduction map, so it must correspond to an element in the formal group of $E$ at $2$, and therefore its order must be a power of $2$. But this is not possible since $P$ has order $D$ and $Q$ has order $3$, both of which are coprime to $2$. Hence 
\begin{align*}
 \mathrm{ord}_2(\mathscr{M}(c,D))&=\mathrm{ord}_2(9)-\mathrm{ord}_2(3-X(P))\\
&=0.
\end{align*}
But $\mathscr{M}(c,D)=\mathscr{M}(-c,D)$ since $\wp(z)$ is an even function and $\#(V)$ is even, so
   \begin{align*}
    \mathrm{ord}_2(\sum\limits_{c\in V}\mathscr{M}(c,D))&\geqslant 1.
   \end{align*}
It follows that 
\begin{align*}
\mathrm{ord}_2\left(\sum\limits_{c\in V}\mathcal{E}_1^*\left(\frac{c\O}{D}+\frac{\Omega}{3}, \mathcal{L}\right)\right)&=\min\left(\mathrm{ord}_2\left(\frac{1}{2}\sum\limits_{c\in V}\mathscr{M}(c,D)\right), \mathrm{ord}_2\left(\#(V)\right)\right)\\
&\geqslant 0
\end{align*}
as required.
\end{proof}

\begin{remark}\label{rem1}
For $n=0$ (i.e. for $E$), a computation using Magma gives
$$L^{(\text{alg})}(\overline{\psi},1)=\frac{1}{3}.$$
\end{remark}
Thus we have proved:
\begin{thm}\label{thm2.2.12} Let $D\in \mathcal{O}_K$ be as above and let $n$ be the number of primes in $\mathcal{O}_K$ dividing $D$. Then we have
$$\mathrm{ord}_2(\Phi_{D^3})\geqslant n.$$
\end{thm}

Finally, we are ready to prove the first main result:
\begin{thm} Let $D\in \mathcal{O}_K$ be as above and let $n$ be the number of primes in $\mathcal{O}_K$ dividing $D$. Then 
 $$\mathrm{ord}_2\left(L^{(\text{alg})}(\overline{\psi}_{D^3},1)\right)\geqslant n.$$
\end{thm}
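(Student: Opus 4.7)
The plan is to prove the theorem by induction on $n$. The base case $n=0$ (i.e.\ $D=1$) is Remark \ref{rem1}: $L^{(\text{alg})}(\overline{\psi},1)=1/3$, so the $2$-adic valuation is $0$. For the inductive step, I would isolate the top term $\alpha=\mathbf{1}:=(1,\ldots,1)$ in the sum defining $\Phi_{D^3}$. When $\alpha=\mathbf{1}$ we have $D_\alpha=D$ and $S_\alpha=S$, and every prime in $S$ already divides the conductor $3D\mathcal{O}_K$ of $\psi_{D^3}$, so no Euler factors are removed: $L_S(\overline{\psi}_{D^3},1)=L(\overline{\psi}_{D^3},1)$. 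Using $\Omega_{D^3}=\Omega/\sqrt{D}$ together with the fact that $D$ is odd (so $\sqrt{D}$ is a $2$-adic unit), rearranging Definition \ref{def1} gives
\begin{equation*}
\frac{L^{(\text{alg})}(\overline{\psi}_{D^3},1)}{\sqrt{D}} \;=\; \Phi_{D^3} \;-\; \sum_{\alpha\neq\mathbf{1}}\frac{L_S(\overline{\psi}_{D_\alpha^3},1)}{\Omega}.
\end{equation*}

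For each $\alpha\neq\mathbf{1}$, the element $D_\alpha$ has $n_\alpha<n$ special-split prime factors, so the inductive hypothesis yields $\mathrm{ord}_2\bigl(L^{(\text{alg})}(\overline{\psi}_{D_\alpha^3},1)\bigr)\geq n_\alpha$. Comparing imprimitive and primitive $L$-values supplies the missing Euler factors at the primes $\pi_j\notin S_\alpha$:
\begin{equation*}
\frac{L_S(\overline{\psi}_{D_\alpha^3},1)}{\Omega} \;=\; \frac{L^{(\text{alg})}(\overline{\psi}_{D_\alpha^3},1)}{\sqrt{D_\alpha}}\prod_{\pi_j\notin S_\alpha}\left(1-\frac{\overline{\psi}_{D_\alpha^3}(\pi_j)}{\mathrm{N}\pi_j}\right).
\end{equation*}
The critical $2$-adic estimate is that every such Euler factor has $\mathrm{ord}_2\geq 1$. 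Indeed, since $\pi_j\equiv 1\bmod 4$ by our sign choice we have $\overline{\pi_j}\equiv 1\bmod 4$ and hence $\mathrm{N}\pi_j\equiv 1\bmod 4$. By Lemma \ref{lem1.4}, $\overline{\psi}(\pi_j)\equiv\pm 1\bmod 4$; and for $\pi_j\nmid D_\alpha$ the quadratic-twist relation $\psi_{D_\alpha^3}(\pi_j)=\left(\frac{D_\alpha}{\pi_j}\right)_2\psi(\pi_j)$ preserves this $\pm 1$ residue class, so $\overline{\psi}_{D_\alpha^3}(\pi_j)\equiv\pm 1\bmod 4$. Therefore $\mathrm{N}\pi_j-\overline{\psi}_{D_\alpha^3}(\pi_j)\equiv 0$ or $2\bmod 4$, and since $\mathrm{N}\pi_j$ is odd, the Euler factor has $2$-adic valuation at least $1$.

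Combining these bounds, each term in $\sum_{\alpha\neq\mathbf{1}}$ has $\mathrm{ord}_2\geq n_\alpha+(n-n_\alpha)=n$, so the whole sum does. Together with $\mathrm{ord}_2(\Phi_{D^3})\geq n$ from Theorem \ref{thm1}, this forces $\mathrm{ord}_2\bigl(L^{(\text{alg})}(\overline{\psi}_{D^3},1)/\sqrt{D}\bigr)\geq n$, and since $\sqrt{D}$ is a $2$-adic unit, the conclusion follows. The main obstacle I anticipate is the mod-$4$ analysis of the twisted Gr\"ossencharacter: one has to identify $\psi_{D_\alpha^3}$ precisely as the twist of $\psi$ by the quadratic character $\left(\frac{D_\alpha}{\cdot}\right)_2$, and verify carefully that this twist preserves the $\pm 1\bmod 4$ congruence at the $\pi_j$, so that no $2$-adic information is lost when the Euler factors are combined with the inductive bound.
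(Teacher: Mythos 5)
Your proposal is correct and follows essentially the same route as the paper: decompose $\Phi_{D^3}$ into the primitive top term plus the imprimitive lower twists, bound each lower term by combining the inductive hypothesis with the observation that every missing Euler factor $1-\overline{\psi}_{D_\alpha^3}(\pi_j)/\mathrm{N}\pi_j$ has $2$-adic valuation at least $1$ (because $\psi_{D_\alpha^3}((\pi_j))=\pm\pi_j$ with $\pi_j\equiv 1\bmod 4$), and conclude from $\mathrm{ord}_2(\Phi_{D^3})\geqslant n$. The only differences are cosmetic: you start the induction at $n=0$ and normalise by $\sqrt{D_\alpha}$ explicitly, whereas the paper treats $n=1$ as the base case and works throughout with $L/\Omega$.
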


\begin{proof}
 We prove this by induction on $n$. Write $D=D_\alpha$, and given $\alpha,\beta\in (\mathbb{Z}/2\mathbb{Z})^n$, we write $\beta< \alpha$ if $D_\beta\mid D_\alpha$ but $D_\beta\neq D_\alpha$. If $n_\alpha=1$, $S_\alpha=\{\pi_1\}$ say, then
$$\Phi_{\pi_1^3}=\frac{L_{S_\alpha}(\overline{\psi},1)}{\Omega}+\frac{L(\overline{\psi}_{\pi_1^3},1)}{\Omega}.$$
By Theorem \ref{thm2.2.12}, we know that $\mathrm{ord}_2(\Phi_{\pi_1^3})\geqslant 1$. Now,
\begin{align*}\frac{L_{S_\alpha}(\overline{\psi},1)}{\Omega}&=\left(1-\frac{\overline{\psi}((\pi_1))}{\pi_1\overline{\pi}_1}\right)\frac{L(\overline{\psi},1)}{\Omega}\\
&=\left(\frac{\pi_1\pm1}{\pi_1}\right)\frac{1}{3}
\end{align*}
since $\psi((\pi_1))=\pm\pi_1$ and by Remark \ref{rem1} we have $\frac{L(\overline{\psi},1)}{\Omega}=\frac{1}{3}$.
But $\mathrm{ord}_2\left(\frac{\pi_1\pm1}{\pi_1}\right)\geqslant 1$, hence
\begin{align*}
 \mathrm{ord}_2\left(\frac{L(\overline{\psi}_{\pi_1^3},1)}{\Omega}\right)\geqslant 1=n_\alpha.
\end{align*}
Now suppose $n_\alpha>1$ and our result holds for $0< \beta < \alpha$. Again,
$$\Phi_{D_\alpha^3}=\frac{L_{S_\alpha}(\overline{\psi},1)}{\Omega}+\sum\limits_{0< \beta < \alpha}\frac{L_{S_\alpha}(\overline{\psi}_{D_\beta^3},1)}{\Omega}+\frac{L_{S_\alpha}(\overline{\psi}_{D_\alpha^3},1)}{\Omega},$$
where the last term is primitive. We know by Theorem \ref{thm2.2.12} that $\mathrm{ord}_2(\Phi_{D_\alpha^3})\geqslant n_\alpha$.
Now
\begin{align*}
 \frac{L_{S_\alpha}(\overline{\psi},1)}{\Omega}&=\prod\limits_{\pi\in S_\alpha}\left(1-\frac{\overline{\psi}((\pi))}{\pi\overline{\pi}}\right)\frac{L(\overline{\psi},1)}{\Omega}\\
&=\prod\limits_{\pi\in S_\alpha}\left(\frac{\pi\pm1}{\pi}\right)\frac{1}{3}
\end{align*}
where $\mathrm{ord}_2\left(\frac{\pi\pm 1}{\pi}\right)\geqslant1$ for each $\pi\in S_\alpha$. Hence
\begin{align*}\mathrm{ord}_2\left(\frac{L_{S_\alpha}(\overline{\psi},1)}{\Omega}\right)&\geqslant \#(S_\alpha)\\
&\geqslant n_\alpha.
\end{align*}
Also for $0< \beta< \alpha$,
$$\frac{L_{S_\alpha}(\overline{\psi}_{D_\beta^3},1)}{\Omega}=\prod\limits_{\pi\in S_\alpha\backslash S_\beta}\left(1-\frac{\overline{\psi}_{D_\beta^3}((\pi))}{\pi\overline{\pi}}\right)\frac{L(\overline{\psi}_{D_\beta^3},1)}{\Omega}.$$
We have $\psi_{D_\beta^3}((\pi))=\left(\frac{D_\beta}{\pi}\right)_6^3\psi((\pi))=\pm \pi$. Hence 
\begin{align*}
\mathrm{ord}_2\left(\prod\limits_{\pi\in S_\alpha\backslash S_\beta}\left(1-\frac{\overline{\psi}_{D_\beta^3}((\pi))}{\pi\overline{\pi}}\right)\right)&=\mathrm{ord}_2\left(\prod\limits_{\pi\in S_\alpha\backslash S_\beta}\left(\frac{\pi\pm 1}{\pi}\right)\right)\\
&\geqslant \#(S_\alpha\backslash S_\beta)\\
&=n_\alpha-n_\beta.
\end{align*}
Furthermore, by the induction hypothesis, $\mathrm{ord}_2\left(\frac{L(\overline{\psi}_{D_\beta^3},1)}{\Omega}\right)\geqslant n_\beta$. Thus
\begin{align*} 
 \mathrm{ord}_2\left(\frac{L_{S_\alpha}(\overline{\psi}_{D_\beta^3},1)}{\Omega}\right)&\geqslant (n_\alpha-n_\beta)+n_\beta\\
&=n_\alpha,
\end{align*}
and so 
$$\mathrm{ord}_2\left(\sum\limits_{0< \beta < \alpha}\frac{L_{S_\alpha}(\overline{\psi}_{D_\beta^3},1)}{\Omega}\right)\geqslant n_\alpha.$$
It follows that 
$$\mathrm{ord}_2\left(\frac{L(\overline{\psi}_{D_\alpha^3},1)}{\Omega}\right)\geqslant n_\alpha$$
as required.
\end{proof}

Recalling $L(E(\lambda),1)=L(\overline{\psi}_\lambda,1)$, the following is an immediate consequence.
\begin{thm}\label{thm2.2.14}Let $D>1$ be an integer which is a product of $k(D)$ distinct special split primes. Then
 $$\mathrm{ord}_2\left(L^{(\text{alg})}(E({D^3}),1)\right)\geqslant 2k(D).$$
\end{thm}

\begin{remark}\label{rem2} The bound obtained in Theorem \ref{thm2.2.14} is sharp. For example, let $\pi$ be the prime $13 + 12\omega$ and let $D=\rm{N}(\pi)=157$, which is a rational prime. Then $L^{(\text{alg})}(E({D^3}),1)=12$ so $\mathrm{ord}_2\left(L^{(\text{alg})}(E({D^3}),1)\right)=2$, as required. More numerical examples can be found in Appendix B.
\end{remark}

\section{Cubic Twists.}\label{section2.3}
Now we look at the cubic twists of $E$, i.e. the curves of the form
$$E(D^2): y^2=4x^3-3^3D^2$$
for a cube-free integer $D$. This is isomorphic to the curve
 $$Y^2+DY=X^3-7D^2$$
 via the change of variables $X=x$ and $Y=2y+D$. Let $\psi_{D^2}$ denote the Gr\"{o}ssencharacter of $E(D^2)/K$. 
\begin{defn} We say a prime $\pi$ of $K$ is \emph{cubic-special} if it splits completely in the field $K(E[27])$, but does not split completely in the strictly larger field $K(E[27])((1-\omega)^{1/9})$.
\end{defn}

The following characterisation of cubic-special primes will be useful, in particular in proving Corollary \ref{cor6} of Appendix A.

\begin{lem}\label{lem3} A prime $\pi$ of $K$ is cubic special if and only if $\pi\equiv 1\bmod 27$ and $9$ divides the order of $1-\omega$ in $\left(\mathcal{O}_K/\pi\mathcal{O}_K\right)^\times$. The set consisting of such primes has density $\frac{2}{3}$ in the set of primes of $K$ congruent to $1$ modulo $27$. In particular, there are infinitely many such primes.
\end{lem}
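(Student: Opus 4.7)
My plan is to break the lemma into three steps. First, I would use the ray class field identification from Section 3: since $27\mathcal{O}_K$ is a multiple of the conductor $3\mathcal{O}_K$ of $\psi$, Lemma 3 of Coates--Wiles gives $K(E[27]) = \mathcal{R}(27\mathcal{O}_K)$. A prime $\pi$ of $K$ coprime to $3$ splits completely in $K(E[27])$ precisely when its class in $(\mathcal{O}_K/27)^\times / \widetilde{\boldsymbol{\mu}}_6$ is trivial; since no non-trivial sixth root of unity is congruent to $1$ modulo $27$, one can always re-choose the generator of the ideal $(\pi)$ so that this amounts to $\pi \equiv 1 \bmod 27$.

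Next I would show $[K(E[27])((1-\omega)^{1/9}) : K(E[27])] = 3$ and deduce the density $\tfrac{2}{3}$. The key input is the inclusion $K(\zeta_{27}) \subseteq K(E[27])$ together with the fact that $K(\zeta_{27})/K$ is totally ramified of degree $9$ at $\mathfrak{p}_3 = (1-\omega)$, with uniformizer $1-\zeta_{27}$. This yields a relation $1-\omega = u(1-\zeta_{27})^9$ for some unit $u \in \mathbb{Z}[\zeta_{27}]^\times$, so that $(1-\omega)^{1/3} = u^{1/3}(1-\zeta_{27})^3$ and $(1-\omega)^{1/9} = u^{1/9}(1-\zeta_{27})$. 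Showing $u$ is a cube but not a ninth power inside $K(E[27])$---via Kummer theory over $K(\zeta_{27})$ combined with the abelian/ray-class structure of $K(E[27])/K$---places $(1-\omega)^{1/3}$ inside $K(E[27])$ while keeping $(1-\omega)^{1/9}$ outside. Chebotarev applied to the resulting cubic extension of $K(E[27])$ then delivers density $\tfrac{2}{3}$ and infinitude.

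Finally, to derive the order condition: once $\pi \equiv 1 \bmod 27$, the residue field at every prime of $K(E[27])$ above $\pi$ is $\mathbb{F}_q$ with $q = \mathrm{N}_{K/\mathbb{Q}}(\pi) \equiv 1 \bmod 27$, and the cubic extension above splits further precisely when $1-\omega \in (\mathbb{F}_q^\times)^9$. Using the $\mathcal{O}_K$-identities $(1-\omega)^2 = -3\omega$ and $(1-\omega)^6 = -27$, a short calculation in the cyclic group $\mathbb{F}_q^\times$ (whose order is divisible by $27$) converts ``$1-\omega \notin (\mathbb{F}_q^\times)^9$'' into ``$9 \mid \mathrm{ord}_{\mathbb{F}_q^\times}(1-\omega)$'', yielding the stated equivalence. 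The main obstacle I expect is the cyclotomic degree computation: pinning down the cube and ninth-power status of the explicit unit $u \in \mathbb{Z}[\zeta_{27}]$, which will likely require an explicit Kummer calculation inside $K(\zeta_{27})$.
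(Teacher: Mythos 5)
Your overall architecture matches the paper's: identify $K(E[27])$ with the ray class field $\mathcal{R}(27)$ via Coates--Wiles, translate complete splitting into $\pi\equiv 1\bmod 27$ after adjusting by a root of unity, show the top extension $K(E[27])((1-\omega)^{1/9})/K(E[27])$ is cubic, and finish with \v{C}ebotarev. The genuine gap is in the middle step, which you yourself flag as the ``main obstacle'' and do not carry out: one must show that $(1-\omega)^{1/3}$ lies in $K(E[27])$ while $(1-\omega)^{1/9}$ does not, and your proposed route --- writing $1-\omega=u(1-\zeta_{27})^9$ and deciding whether the unit $u$ is a cube or a ninth power inside $K(E[27])$ --- is not easier than the original problem. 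Since $\mathcal{R}(27)$ has degree $81$ over $K$ and strictly contains $K(\boldsymbol{\mu}_{27})$ (degree $9$), Kummer theory over $K(\boldsymbol{\mu}_{27})$ alone cannot decide whether $u^{1/3}$ or $u^{1/9}$ lies in $\mathcal{R}(27)$; to decide it you would in effect have to compute power residue symbols of $u$ at primes $\equiv 1\bmod 27$, which is exactly the computation the detour through $\zeta_{27}$ was meant to avoid. The paper settles both points directly: $(1-\omega)^{1/3}\in\mathcal{R}(27)$ because the explicit formula $\left(\frac{1-\omega}{\pi}\right)_3=\omega^{m}$ for $\pi=1+3(m+n\omega)$ (Cassels--Fr\"{o}hlich) shows the cubic residue symbol of $1-\omega$ is trivial at every prime $\pi\equiv 1\bmod 9$, so the cubic abelian extension $K((1-\omega)^{1/3})/K$ is contained in $\mathcal{R}(27)$ by class field theory; and the extension is strictly larger than $\mathcal{R}(27)$ because $K((1-\omega)^{1/9})/K$ is not Galois, so $\mathcal{R}(27)((1-\omega)^{1/9})/K$ is non-abelian and cannot collapse onto the abelian extension $\mathcal{R}(27)/K$. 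You need one of these two arguments, or an honest completion of your unit computation, for the degree claim (and hence the density $\tfrac{2}{3}$) to stand.

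A secondary caution: the passage between ``$1-\omega$ is not a ninth power in $\mathbb{F}_q^\times$'' and ``$9$ divides the order of $1-\omega$'' is an implication only in the direction needed later in the paper (non-ninth-power implies $9\mid\mathrm{ord}$, using $27\mid q-1$); the two conditions are equivalent only when $\mathrm{ord}_3(q-1)=3$ exactly, and diverge when $q\equiv 1\bmod 81$. The paper's own proof likewise establishes only the one direction, so this is not a defect peculiar to your write-up, but your phrase ``converts'' promises an equivalence that a short calculation in $\mathbb{F}_q^\times$ will not deliver in general.
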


\begin{proof} First, we note that $K(E[27])$ is equal to the ray class field $K(27)$ of $K$ modulo $27$  by \cite[Lemma 3]{coa-wil}. Since $\mathbb{Q}(\boldsymbol{\mu}_{27})\subset K(27)$, it follows that $K(27)\left((1-\omega)^{\frac{1}{9}}\right)/ K(27)$ is a Galois extension. Also $K(27)\left((1-\omega)^{\frac{1}{9}}\right)/K$ is not an abelian extension, since its subextension $K\left((1-\omega)^{\frac{1}{9}}\right)/K$ is not Galois. In addition, $K(27)\left((1-\omega)^{\frac{1}{9}}\right)/K(27)$ is a degree $3$ extension since we showed that $\left(\frac{1-\omega}{\pi}\right)_3=1$, i.e. $(1-\omega)^{\frac{1}{3}}\in K(27)$. Let $H$ denote the Galois group of this degree $3$ extension. Furthermore, let $G$ denote the Galois group $\mathrm{Gal}\left(K(27)\left((1-\omega)^{\frac{1}{9}}\right)/K\right)$, and let $\mathrm{Frob}_\pi\in G$ denote the Frobenius at $\pi$. Then $ \mathrm{Frob}_\pi|_{K(27)}=id$ in $H$ if and only if $\psi_{E(\pi^2)/K}\left((\pi)\right)\equiv 1\bmod 27$. If we take a prime $\pi$ such that $\mathrm{Frob}_\pi\in H\backslash \{id\}$, then $(1-\omega)$ is not a ninth power modulo $\pi$ in $K(27)\left((1-\omega)^{\frac{1}{9}}\right)$, and it follows that the order of  $1-\omega$ must be divisible by $9$ since $27$ divides $\mathrm{N}(\pi)-1=|\left(\mathcal{O}_K/\pi\mathcal{O}_K\right)^\times|$. By the \v{C}ebotarev density theorem, the density of such primes is $\frac{2}{3}$. 
\end{proof}

From now on, let us assume that each prime $\pi$ of $K$ dividing $D$ is cubic-special.
Note that if $p$ is a rational prime such that $p\equiv 1\bmod 3$, then $p$ always splits in $K$ since we can write $p=a^2-ab+b^2=(a+b\omega)(a+b\overline{\omega})$ for some integers $a$ and $b$. In addition, if $p\equiv 1\bmod 27$, it can easily be shown that we can assume $b\equiv 0\bmod 27$ and $a\equiv 1\bmod 27$ using symmetry in $a$ and $b$ and change of sign of $a$. Hence we can write $p=\pi\overline{\pi}$ with $\pi\in\mathcal{O}_K$ and $\pi\equiv 1\bmod 27$.

Before we begin, it will be useful to find a model for our curve $E: Y^2 + Y = X^3 - 7$ where $E$ has good reduction at $3$. Let  $u=\frac{\sqrt{\alpha}}{\beta^2}$ where $\alpha=\frac{27+3\sqrt{-3}}{2}$, $\beta=\sqrt[3]{\frac{1-3\sqrt{-3}}{2}}$, and let \mbox{$r=-\frac{3}{2}\sqrt[3]{\frac{-13-3\sqrt{-3}}{2}}$}. Then the change of variables $x=u^2X+r$, $y=2u^3Y$,  gives an equation for $E$ with good reduction at $3$ (see Proposition \ref{prop4} of Appendix A).

Given $\alpha=(\alpha_1,\ldots \alpha_n)$ with $\alpha_j\in \{0, 1, 2\}$ for all $j=1,\ldots , n$, let $D_\alpha$ be an element of $K$ of the form $D_\alpha=\pi_1^{\alpha_1}\cdots \pi_n^{\alpha_n}$ where $\pi_j$ are distinct cubic-special primes. Similarly to the quadratic twist case, we may consider $\alpha=(\alpha_1,\ldots , \alpha_n)\in\{0, 1, 2\}^n$ as an element of $(\mathbb{Z}/3\mathbb{Z})^n$. Given $\alpha\in (\mathbb{Z}/3\mathbb{Z})^n$, let $n_\alpha$ be the number of distinct primes of $K$ dividing $D_\alpha$ and define $S_\alpha=\{\pi_j:\pi_j\mid D_\alpha\}$. Pick $\alpha \in (\mathbb{Z}/3\mathbb{Z})^n$ such that $n_\alpha=n$, and set $D=D_\alpha$ and $S=\{\pi_1,\ldots \pi_n\}$. We will study the following sum of imprimitive Hecke $L$-functions (see Definition \ref{def2.2.5}).
\begin{defn}\label{def2} Given $D$ as above, let
$$\Phi_{D^2}=\sum\limits_{\alpha\in (\mathbb{Z}/3\mathbb{Z})^n}\frac{L_S(\overline{\psi}_{D_\alpha^2}, 1)}{\Omega}.$$
\end{defn}

Let $\mathfrak{f}$ be the conductor of the Gr\"{o}ssencharacter $\psi_{D^2}$. Then again, a computation using Tate's algorithm shows that \mbox{$\mathfrak{f}=3D\mathcal{O}_K$}. Also, the Artin map gives an isomorphism between $\mathrm{Gal}(K(\mathfrak{f})/K)$ and $\left(\mathcal{O}_K/3D\mathcal{O}_K\right)^\times/\widetilde{\boldsymbol{\mu}}_6$, which is isomorphic to $\left(\mathcal{O}_K/D\mathcal{O}_K\right)^\times$ since $(3, D)=1$ and $3$ ramifies in $K$. Now let $\mathcal{C}$ be a set of elements of $\mathcal{O}_K$ such that $c\in \mathcal{C}$ implies $\omega c$, $\omega^2 c\in \mathcal{C}$ and $c \bmod D$ runs over $\left(\mathcal{O}_K/D\mathcal{O}_K\right)^\times$ precisely once. This is possible since $3$ and $D$ are coprime by assumption. Then let 
$$\mathcal{B}=\{(3c+D) \; : \; c\in \mathcal{C}\}$$
so that $3c+D\equiv 1 \bmod 3\mathcal{O}_K$, where $3\mathcal{O}_K$ is the conductor of $\psi$. In particular, if $\mathfrak{b}=(3c+D)\in \mathcal{B}$ then we have  $\psi(\mathfrak{b})=3c+D$.

Let $m$ be such that $\boldsymbol{\mu}_m\subset K$. For $a\in K^*$ and $\mathfrak{b}$ an ideal of $K$ coprime to $m$ and $a$, we write $\left(\frac{a}{\mathfrak{b}}\right)_m$ for the $m$-th power residue symbol defined by the equation
$$(\sqrt[m]{a})^{\sigma_\mathfrak{b}}=\left(\frac{a}{\mathfrak{b}}\right)_m\sqrt[m]{a},$$
where $\sigma_\mathfrak{b}=\left(\mathfrak{b}, K(\sqrt[m]{a})/K\right)\in \mathrm{Gal}\left( K(\sqrt[m]{a})/K\right)$ denotes the Artin symbol of $\mathfrak{b}$. Also, for any $a, b\in K^*$, we define
$$\left(\frac{a}{b}\right)_m=\prod\limits_{v}\left(\frac{a}{v}\right)_m^{v(b)},$$
where $v$ runs through all primes of $K$ coprime to $a$. Recall also that for a prime $\pi$ of $K$ and $c\in\left(\mathcal{O}_K/\pi\mathcal{O}_K\right)^\times$, we have Euler's criterion
$$\left(\frac{c}{\pi}\right)_m\equiv c^{\frac{\rm{N}(\pi)-1}{m}}\bmod \pi.$$

\begin{defn} Let
 $$V=\{c\in \mathcal{C} \;\; : \left(\frac{\pi_j}{\mathfrak{b}}\right)_3=1\ \text{ for all } j=1,\ldots, n \text{, where } \mathfrak{b}=(3c+D)\}.$$
\end{defn}
Recall that we have $\left(\frac{1-\omega}{\pi_j}\right)_3=\left(\frac{1-\omega^2}{\pi_j}\right)_3=\omega^m$ and $\left(\frac{\omega}{\pi_j}\right)_3=\omega^{-m-n}$ where $m, n\in \mathbb{Z}$ are such that $\pi_j=1+3(m+n\omega)$ (see \cite[p. 354]{cas-fro}). Hence for $c\in V$ we have
\begin{align*} \left(\frac{\pi_j}{\mathfrak{b}}\right)_3 &= \left(\frac{3c+D}{\pi_j}\right)_3 \mbox{\hspace{20pt} (since $\pi_j\equiv \mathfrak{b}\equiv 1\bmod 3$, see \cite[p. 354]{cas-fro})}\\
&= \left(\frac{3c}{\pi_j}\right)_3\\
&=\left(\frac{c}{\pi_j}\right)_3 \;\;\;\;\;\;\;\text{(since $\pi_j\equiv 1\bmod 9$, we have $\left(\frac{1-\omega}{\pi_j}\right)_3=\left(\frac{1-\omega^2}{\pi_j}\right)_3=1$).}
\end{align*}
Furthermore, by assumption on $\pi_j$, we have $m+n\equiv 0\bmod 3$ so $ \left(\frac{\omega}{\pi_j}\right)_3=1$. Hence $\left(\frac{c}{\pi_j}\right)_3=\left(\frac{\omega c}{\pi_j}\right)_3=\left(\frac{\omega^2c}{\pi_j}\right)_3$. So $c\in V$ implies $\omega c$, $\omega^2 c\in V$.

It is also easy to check that
\begin{align*}\mathcal{L}_{D^2}&=\frac{\Omega}{\sqrt[3]{D}}\mathcal{O}_K.
\end{align*}

\begin{thm}\label{thm2.3.5} We have

$$\Phi_{D^2}=3^n\sum\limits_{c\in V} \frac{1}{3D}\mathcal{E}_1^*\left(\frac{c\O}{D}+\frac{\Omega}{3}, \mathcal{L}\right).$$

\end{thm}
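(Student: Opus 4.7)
The plan is to follow the proof of Theorem 3.5 almost verbatim, replacing the sixth root $D_\alpha^{3/6}=D_\alpha^{1/2}$ with the sixth root $D_\alpha^{2/6}=D_\alpha^{1/3}$, so that the quadratic residue symbol $\left(\frac{\cdot}{\cdot}\right)_2$ gets replaced by the cubic residue symbol $\left(\frac{\cdot}{\cdot}\right)_3$. Concretely, I would first apply Lemma \ref{lem2} for each $\alpha\in(\mathbb{Z}/3\mathbb{Z})^n$ with $\lambda=D_\alpha^2$ and $g=3D$; this is legitimate because the conductor $\mathfrak{f}_{\psi_{D_\alpha^2}}=3D_\alpha\mathcal{O}_K$ divides $3D\mathcal{O}_K$, and the imprimitive $L$-function obtained is precisely $L_S(\overline{\psi}_{D_\alpha^2},s)$ with $S=\{\pi_1,\ldots,\pi_n\}$. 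This gives
\[
\frac{L_S(\overline{\psi}_{D_\alpha^2},1)}{\Omega_{D_\alpha^2}}=\frac{1}{3D}\sum_{\mathfrak{b}\in\mathcal{B}}\mathcal{E}_1^*\!\left(\frac{\Omega_{D_\alpha^2}}{3D},\mathcal{L}_{D_\alpha^2}\right)^{\sigma_\mathfrak{b}}.
\]

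Next, I rescale from the twisted lattice to $\mathcal{L}$. Since $\Omega_{D_\alpha^2}=\Omega/D_\alpha^{1/3}$ and $\mathcal{L}_{D_\alpha^2}=\mathcal{L}/D_\alpha^{1/3}$, a direct check from the defining series yields $\mathcal{E}_1^*(z/\mu,L/\mu)=\mu\,\mathcal{E}_1^*(z,L)$, so with $\mu=D_\alpha^{1/3}$ we obtain
\[
\mathcal{E}_1^*\!\left(\frac{\Omega_{D_\alpha^2}}{3D},\mathcal{L}_{D_\alpha^2}\right)=D_\alpha^{1/3}\,\mathcal{E}_1^*\!\left(\frac{\Omega}{3D},\mathcal{L}\right).
\]
Applying $\sigma_\mathfrak{b}$ and dividing both sides of the displayed formula for $L_S/\Omega_{D_\alpha^2}$ by $D_\alpha^{1/3}$, I obtain the analogue of equation \eqref{eq5}:
\[
\frac{L_S(\overline{\psi}_{D_\alpha^2},1)}{\Omega}=\frac{1}{3D}\sum_{\mathfrak{b}\in\mathcal{B}}(D_\alpha^{1/3})^{\sigma_\mathfrak{b}-1}\,\mathcal{E}_1^*\!\left(\frac{\Omega}{3D},\mathcal{L}\right)^{\sigma_\mathfrak{b}}=\frac{1}{3D}\sum_{\mathfrak{b}\in\mathcal{B}}\left(\frac{D_\alpha}{\mathfrak{b}}\right)_{\!3}\mathcal{E}_1^*\!\left(\frac{\Omega}{3D},\mathcal{L}\right)^{\sigma_\mathfrak{b}},
\]
where the final equality is the definition of the cubic residue symbol via the Artin map.

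Summing over $\alpha\in(\mathbb{Z}/3\mathbb{Z})^n$, I interchange the sums and apply orthogonality: for each fixed $\mathfrak{b}$,
\[
\sum_{\alpha\in(\mathbb{Z}/3\mathbb{Z})^n}\left(\frac{D_\alpha}{\mathfrak{b}}\right)_{\!3}=\prod_{j=1}^n\left(1+\left(\frac{\pi_j}{\mathfrak{b}}\right)_{\!3}+\left(\frac{\pi_j}{\mathfrak{b}}\right)_{\!3}^{2}\right),
\]
which equals $3^n$ when $\left(\frac{\pi_j}{\mathfrak{b}}\right)_3=1$ for every $j$, and vanishes otherwise. By definition this is precisely the condition $c\in V$ for $\mathfrak{b}=(3c+D)$. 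Finally, I invoke Fact \ref{fact1}, using $\psi(\mathfrak{b})=3c+D$, to rewrite $\mathcal{E}_1^*(\Omega/(3D),\mathcal{L})^{\sigma_\mathfrak{b}}=\mathcal{E}_1^*((3c+D)\Omega/(3D),\mathcal{L})=\mathcal{E}_1^*(c\Omega/D+\Omega/3,\mathcal{L})$, which delivers the claimed identity.

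The only place that requires any care, and which I would expect to be the main obstacle, is the rescaling step: one must verify the homogeneity relation $\mathcal{E}_1^*(z/\mu,L/\mu)=\mu\mathcal{E}_1^*(z,L)$ and correctly identify $(D_\alpha^{1/3})^{\sigma_\mathfrak{b}-1}$ with $\left(\frac{D_\alpha}{\mathfrak{b}}\right)_3$, which requires checking that the scaling factor combines cleanly with the twist $\psi_{D_\alpha^2}(\mathfrak{b})=\left(\frac{D_\alpha}{\mathfrak{b}}\right)_3\psi(\mathfrak{b})$ inside the Eisenstein series (the residue symbol, being a cube root of unity, commutes with everything under the Galois action). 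Once this is in place, the orthogonality argument and the substitution via Fact \ref{fact1} are mechanical, and the final formula drops out exactly as in the quadratic case.
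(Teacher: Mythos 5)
Your proposal is correct and follows essentially the same route as the paper: apply Lemma \ref{lem2} with $g=3D$, rescale via $\Omega_{D_\alpha^2}=\Omega/D_\alpha^{1/3}$ to extract the factor $(D_\alpha^{1/3})^{\sigma_{\mathfrak b}-1}=\left(\frac{D_\alpha}{\mathfrak b}\right)_3$, kill all terms with a nontrivial cubic symbol by summing over $\alpha$ (your product of geometric sums is the same computation as the paper's character-orthogonality step), and finish with Fact \ref{fact1}. The only difference is presentational — you make the homogeneity $\mathcal{E}_1^*(z/\mu,L/\mu)=\mu\,\mathcal{E}_1^*(z,L)$ explicit where the paper leaves it implicit.
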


\begin{proof} It is clear that Lemma \ref{lem2.2.1}, Fact \ref{fact1} and Corollary \ref{cor2.2.4} still apply. Thus, for any $\alpha\in (\mathbb{Z}/3\mathbb{Z})^n$,
$$\frac{L_S(\overline{\psi}_{D_\alpha^2}, 1)}{\Omega_{D_\alpha^2}}=\frac{1}{3D}\sum\limits_{\mathfrak{b}\in\mathcal{B}}\mathcal{E}_1^*\left(\frac{\Omega_{D_\alpha^2}}{3D}, \mathcal{L}_{D_\alpha^2}\right)^{\sigma_\mathfrak{b}}$$
and $\Omega_{D_\alpha^2}=\frac{1}{D_\alpha^{1/3}}\Omega$, so
\begin{equation}\label{eq3.5}\frac{L_S(\overline{\psi}_{D_\alpha^2}, 1)}{\Omega}=\frac{1}{3D}\sum\limits_{\mathfrak{b}\in\mathcal{B}}(D_\alpha^{2})^{\frac{\sigma_{\mathfrak{b}}-1}{6}}\mathcal{E}_1^*\left(\frac{\Omega}{3D}, \mathcal{L}\right)^{\sigma_\mathfrak{b}}
\end{equation}
and
$$(D_\alpha^{2})^{\frac{\sigma_{\mathfrak{b}}-1}{6}}=\left(\frac{D_\alpha}{\mathfrak{b}}\right)_3\in\boldsymbol{\mu}_3.$$

We have a character $\epsilon_3(\cdot , \mathfrak{b}): \left(\mathbb{Z}/3\mathbb{Z}\right)^n\to \boldsymbol{\mu}_3$ defined by
$\mbox{$\epsilon_3(\alpha , \mathfrak{b})= \left(\frac{D_\alpha}{\mathfrak{b}}\right)_3$}$. This is a $1$-dimensional character, and since any $1$-dimensional character is irreducible, considering its inner product with the trivial character gives
$$\sum\limits_{\alpha\in \left(\mathbb{Z}/3\mathbb{Z}\right)^n} \epsilon_3(\alpha, \mathfrak{b})= \left\{ \begin{array}{ll}
 3^n &\mbox{ if $\left(\frac{D_\alpha}{\mathfrak{b}}\right)_3=1 \;\; \text{ for all } \alpha\in (\mathbb{Z}/3\mathbb{Z})^n$} \\
 0 &\mbox{ otherwise.}
       \end{array} \right.
$$
Note that $\left(\frac{D_\alpha}{\mathfrak{b}}\right)_3=1$ for all $\alpha\in (\mathbb{Z}/3\mathbb{Z})^n$ if and only if $\left(\frac{\pi_j}{\mathfrak{b}}\right)_3=1$ for all $j=1,\ldots , n$.
It follows that
$$\Phi_{D_\alpha^2}=3^n\sum\limits_{c\in V} \frac{1}{3D}\mathcal{E}_1^*\left(\frac{\Omega}{3D}, \mathcal{L}\right)^{\sigma_\mathfrak{b}},$$
where $\mathfrak{b}=3c+D$. Again, applying equation \eqref{eq2.2.1} gives the result.
\end{proof}

As in Theorem \ref{thm2.2.9}, we have
\begin{thm}\label{thm2.3.6}
$$\sum\limits_{c\in V}\mathcal{E}_1^*\left(\frac{c\O}{D}+\frac{\Omega}{3}, \mathcal{L}\right)=\frac{1}{2}\left(\sum\limits_{c\in V}\frac{9-\wp'\left(\frac{c\O}{D},\mathcal{L}\right)}{3-\wp\left(\frac{c\O}{D},\mathcal{L}\right)}\right)-\#(V).$$
\end{thm}

\begin{proof} The proof is almost identical to the proof of Theorem \ref{thm2.2.9}, since the addition formula for $\zeta(z,\mathcal{L})$ implies $\zeta\left(\frac{c\O}{D}, \mathcal{L}\right)+\zeta\left(\frac{\omega c\O}{D}, \mathcal{L}\right)+\zeta\left(\frac{\omega^2 c\O}{D}, \mathcal{L}\right)=0$, and we have $c+\omega c+\omega^2 c=0$ for any $c\in V$.
\end{proof}

This gives:
\begin{cor}\label{cor2.3.7}
For $n\geqslant 1$, we have
$$\mathrm{ord}_3\left(\sum\limits_{c\in V}\mathcal{E}_1^*\left(\frac{c\O}{D}+\frac{\Omega}{3}, \mathcal{L}\right)\right)\geqslant 1.$$ 
\end{cor}

Before we prove this, let us prove:
\begin{prop}\label{prop2.3.8}$\mathrm{ord}_3(\#(V))\geqslant 2$.
\end{prop}

\begin{proof} Given $\alpha_i\in\{0,1,2\}$ for $i=1,\ldots , n$, let
$$V_{(\alpha_1,\ldots \alpha_{n})}=\left\{c\in \mathcal{C}: \left(\frac{c}{\pi_i}\right)=\omega^{\alpha_i} \;\;\text{for all } i\in\{1,\ldots n\}\right\},$$
so that now we have $V=V_{(0,\ldots , 0)}$. Given any $(\alpha_1,\ldots , \alpha_{n})$, if we can find $b\in \mathcal{C}$ such that $\left(\frac{b}{\pi_i}\right)=\omega^{\alpha_i}$, then clearly we can write 
\begin{align*}V_{(\alpha_1,\ldots \alpha_{n})}&=bV\\
&=\{bc: c\in V\}
\end{align*}
and if there is no such $b$, then $V_{(\alpha_1,\ldots \alpha_{n})}=\emptyset$.
Also, we have 
$$\mathcal{C}=\bigcup_{(\alpha_1,\ldots , \alpha_n)\in\{0,1,2\}^n} V_{(\alpha_1,\ldots , \alpha_n)},$$
so 
\begin{align*}\#(\mathcal{C})=k\#(V)
\end{align*}
for some positive integer $k\leqslant 3^n$, so that $\mathrm{ord}_3(k)\leqslant n$. On the other hand, $\mathrm{ord}_3\left(\#(\mathcal{C})\right)=\mathrm{ord}_3\left((\rm{N}(\pi_1)-1)\cdots (\rm{N}(\pi_n)-1)\right)\geqslant 3n$. Hence, $\mathrm{ord}_3(\#(V))\geqslant 3n-n=2n\geqslant 2$ for $n\geqslant 1$, so $9\mid \#(V)$ as required.
\end{proof}

Now we are ready to prove Corollary \ref{cor2.3.7}.

\begin{proof} (of Corollary \ref{cor2.3.7})
Let $P$ be the point on $E: y^2=4x^3-3^3$ given by
$$x(P)=\wp\left(\frac{c\O}{D},\mathcal{L}\right), \;\;\; y(P)=\wp'\left(\frac{c\O}{D},\mathcal{L}\right),$$
and define 
$$\mathscr{M}(c,D)=\frac{9-y(P)}{3-x(P)}.$$

Now, write $V$ as a union $H\cup \omega H \cup \omega^2H$ for some set $H$. Then
$$\sum\limits_{c\in V}\mathscr{M}(c,D)=\sum\limits_{c\in H}\frac{9-\wp'\left(\frac{c\O}{D},\mathcal{L}\right)}{3-\wp\left(\frac{c\O}{D},\mathcal{L}\right)}+\frac{9-\wp'\left(\frac{\omega c\O}{D},\mathcal{L}\right)}{3-\wp\left(\frac{\omega c\O}{D},\mathcal{L}\right)}+\frac{9-\wp'\left(\frac{\omega^2 c\O}{D},\mathcal{L}\right)}{3-\wp\left(\frac{\omega^2c\O}{D},\mathcal{L}\right)}.$$
Recall that  $E$ has complex multiplication by $\omega$ via $\omega(x,y)=(\omega x,y)$, so $\wp'(\frac{\omega^i c\O}{D},\mathcal{L})=\wp'(\frac{c\O}{D},\mathcal{L})$ for $i=0, 1, 2$. Moreover, $\mathcal{L}=\omega \mathcal{L}$ so $\wp\left(\frac{\omega^ic\O}{D},\mathcal{L}\right)=\wp\left(\frac{\omega^ic\O}{D},\omega^i\mathcal{L}\right)$, and $\wp$ is homogeneous of degree $-2$ so this simplifies to

\begin{align*}\sum\limits_{c\in V}\mathscr{M}(c,D)=\sum\limits_{c\in H}\frac{3^5-3^3y(P)}{3^3-x(P)^3}.
\end{align*}

To determine $\mathrm{ord}_3(x(P))$ and $\mathrm{ord}_3(y(P))$, recall that the change of variables $x=u^2X+r$, $y=2u^3Y$ where \mbox{$r=-\frac{3}{2}\sqrt[3]{\frac{-13-3\sqrt{-3}}{2}}$} gives us a model of $E$ having good reduction at $3$ (see Proposition \ref{prop4} of Appendix A). In terms of $X$ and $Y$, we have
\begin{align*}\sum\limits_{c\in V}\mathscr{M}(c,D)&=\sum\limits_{c\in H}\frac{3^5-2\cdot 3^3u^3Y(P)}{3^3-r^3-u^6X(P)^3-3u^4rX(P)^2-3u^2r^2X(P)}.\\
\end{align*}

Now, $P$ is a torsion of point of $E$ of order prime to $3$ and $E$ has good reduction at $3$ so $\mathrm{ord}_3(X(P)), \mathrm{ord}_3(Y(P))\geqslant 0$.  If $\mathrm{ord}_3(Y(P))>0$, $P$ reduces to a $2$-torsion after reduction modulo $3$, but $P$ is a $D$-torsion and reduction modulo $3$ is injective, hence we must have $\mathrm{ord}_3(Y(P))=0$. Now, $\mathrm{ord}_3(3^3-r^3)=\mathrm{ord}_3(3^3(1-s^3))$, where $r=3s$.  Also,
\begin{align*}1-s^3&=1+\left(\frac{1}{2}\sqrt[3]{\frac{-13-3\sqrt{-3}}{2}}\right)^3\\
&=\frac{3-3\sqrt{-3}}{16},
\end{align*}
so $\mathrm{ord}_3(1-s^3)=1$. In addition, we have $\mathrm{ord}_3(u)=\frac{3}{4}$ and $\mathrm{ord}_3(r)=1$. Therefore, $\mathrm{ord}_3\left(u^6X(P)^3+3u^4rX(P)^2+3u^2r^2X(P)\right)>4=\mathrm{ord}_3(3^3-r^3)$. It follows that
\begin{align*}\mathrm{ord}_3\left(\sum\limits_{c\in V}\mathscr{M}(c,D)\right)&\geqslant\mathrm{ord}_3(3^5)-\mathrm{ord}_3(3^3-r^3)\\
&=1.
\end{align*}

On the other hand, by Proposition \ref{prop2.3.8}, we have $9\mid \#(V)$. Hence,
\begin{align*}
\mathrm{ord}_3\left(\sum\limits_{c\in V}\mathcal{E}_1^*\left(\frac{c\O}{D}+\frac{\Omega}{3}, \mathcal{L}\right)\right)&=\min\left(\mathrm{ord}_3\left(\frac{1}{2}\sum\limits_{c\in V}\mathscr{M}(c,D)\right), \mathrm{ord}_3(\#(V))\right)\\
&= 1
\end{align*}
as required.
\end{proof}

Recall from Remark \ref{rem1} that $\frac{L(\overline{\psi},1)}{\Omega}=\frac{1}{3}$. It follows from Theorem \ref{thm2.3.5} and Corollary \ref{cor2.3.7} that
\begin{thm}\label{thm2.3.9} Let be a cube-free product of cubic special primes, and let $n$ be the number of distinct prime factors of $D$ in $K$. Then
$$\mathrm{ord}_3(\Phi_{D^2})\geqslant n.$$
\end{thm}

We can generalise Definition \ref{def2} as follows.
\begin{defn} Given a character $\chi: \left(\mathbb{Z}/3\mathbb{Z}\right)^n\to \mathbb{C}^\times$, define
$$\Phi_{D^2}^{(\chi)}=\sum\limits_{\alpha\in\left(\mathbb{Z}/3\mathbb{Z}\right)^n}\chi(\alpha)\frac{L_{S_\alpha}(\overline{\psi}_{D_\alpha^2},1)}{\Omega}.$$
\end{defn}

Using essentially the same arguments that are used to prove Theorem \ref{thm2.3.9}, we can show:

\begin{lem}\label{lem2.3.11} For any character $\chi: \left(\mathbb{Z}/3\mathbb{Z}\right)^n\to \mathbb{C}^\times$, we have
$$\mathrm{ord}_3(\Phi_{D^2}^{(\chi)})\geqslant n.$$
\end{lem}

\begin{proof}
By equation \eqref{eq3.5}, we have
$$\chi(\alpha)\frac{L_S(\overline{\psi}_{D_\alpha^2}, 1)}{\Omega}=\frac{1}{3D}\sum\limits_{\mathfrak{b}\in\mathcal{B}}\chi(\alpha)\left(\frac{D_\alpha}{\mathfrak{b}}\right)_3 \mathcal{E}_1^*\left(\frac{\Omega}{3D}, \mathcal{L}\right)^{\sigma_\mathfrak{b}}.$$
Also, by the law of cubic reciprocity, we have
\begin{align*}
\left(\frac{D_\alpha}{3c+D_\alpha}\right)_3=\left(\frac{3c+D_\alpha}{D_\alpha}\right)_3=\left(\frac{3c}{D_\alpha}\right)_3=\left(\frac{c}{D_\alpha}\right)_3.
\end{align*}
Let $n=n_\alpha$. Then we have a $1$-dimensional character $\epsilon_3^{(\chi)}(\cdot, c) : (\mathbb{Z}/3\mathbb{Z})^n\to \boldsymbol{\mu}_3$ defined by $\epsilon_3^{(\chi)}(\alpha, c)=\chi(\alpha)\left(\frac{c}{D_\alpha}\right)_3$. Now, considering its inner product with the trivial character gives
$$
\sum\limits_{\alpha\in (\mathbb{Z}/3\mathbb{Z})^n}\epsilon_3^{(\chi)}(\alpha, c) = \left\{ \begin{array}{ll}
 3^n &\mbox{ if $c\in V^{(\chi)}$} \\
 0 &\mbox{ otherwise,}
       \end{array} \right.
$$
where  $V^{(\chi)}=\{c\in \mathcal{C} \;\; : \left(\frac{c}{D_\alpha}\right)_3=\chi(\alpha)^2 \text{ for all }\alpha\in(\mathbb{Z}/3\mathbb{Z})^n\}$.
Thus
$$\Phi_{D^2}^{(\chi)}=3^n\sum\limits_{c\in V^{(\chi)}} \frac{1}{3D}\mathcal{E}_1^*\left(\frac{c\O}{D}+\frac{\Omega}{3}, \mathcal{L}\right).$$
Recall that for any prime $\pi_j$ dividing $D_\alpha$, we have \mbox{$\left(\frac{\omega}{\pi_j}\right)_3=1$}. Hence
$$\left(\frac{c}{D_\alpha}\right)_3=\left(\frac{\omega c}{D_\alpha}\right)_3=\left(\frac{\omega^2 c}{D_\alpha}\right)_3,$$
so $c\in V^{(\chi)}$ implies $w c$, $\omega^2 c\in V^{(\chi)}$.  Also, the proof of Proposition \ref{prop2.3.8} shows that $V^{(\chi)}=V_{(\alpha_1,...,\alpha_n)}$ where $\alpha_i\in\{0,1,2\}$ is such that $\chi(e_i)=\omega^{\alpha_i}$, where $e_i\in (\mathbb{Z}/3\mathbb{Z})^n$ has $1$ in the $i$-th entry and $0$ elsewhere. Hence,  $\#(V)=\#(V^{(\chi)})$ or $\#(V^{(\chi)})=0$, so in either case we have $9\mid \#(V^{(\chi)})$. So we can apply the proofs of Theorem \ref{thm2.3.6} and Corollary \ref{cor2.3.7}, and obtain
$$\mathrm{ord}_3\left(\sum\limits_{c\in V^{(\chi)}}\mathcal{E}_1^*\left(\frac{c\O}{D}+\frac{\Omega}{3}, \mathcal{L}\right)\right)\geqslant 1,$$
so the result follows.

\end{proof}

\begin{remark} We note that the assumption $\mathrm{ord}_3(\pi-1)\geqslant 2$ for any prime factors $\pi$ of $D$ is essential. If we take $\pi=55+33\omega$ and $S=\{\pi\}$, then $\mathrm{ord}_3(\pi-1)=\frac{3}{2}$ and $\mathrm{N}(\pi)\equiv 1\bmod 27$. Then we have $\mathrm{ord}_3\left(\frac{L_{S}(\overline{\psi},1)}{\Omega}\right)=\frac{1}{2}$, but a computation shows $\frac{L(\overline{\psi}_{\pi^2},1)\sqrt[3]{\pi}}{\Omega}=3$ and $\frac{L(\overline{\psi}_{\pi^4},1)\sqrt[3]{\pi^2}}{\Omega}=289$, so that $\mathrm{ord}_3(\Phi_{\pi^2})=0$. Note also that we used $\pi\equiv 1\bmod 9$ when showing $\left(\frac{3}{\pi}\right)_3=1$, which is not true when $\mathrm{ord}_3(\pi-1)=\frac{3}{2}$.
\end{remark}

Since we required that  $\mathrm{ord}_3(\pi-1)\geqslant 3$ and that $9$ divides the order of $1-\omega$ in $\left(\mathcal{O}_K/\pi\mathcal{O}_K\right)^\times$ for any prime $\pi$ of $K$ dividing $D$, we can improve the bound in Lemma \ref{lem2.3.11} slightly by a similar proof. This can be found in Corollary \ref{cor6}, Appendix A, and we will only use this in the case $n=1$. We are ready to prove the second main result:

\begin{thm}\label{thm2.3.13} We have 
 $$\mathrm{ord}_3\left(\frac{L(\overline{\psi}_{D^2},1)}{\Omega}\right)\geqslant \frac{1}{2}(n+1).$$
\end{thm}

\begin{proof}
We prove this by induction on $n$. First, write  $\alpha=(\alpha_1,\ldots , \alpha_n)$ for the element in $(\mathbb{Z}/3\mathbb{Z})^n$ with $D=D_\alpha$. Given $\beta, \gamma\in (\mathbb{Z}/3\mathbb{Z})^{n}$, we write $\beta< \gamma$ if $D_\beta\mid D_\gamma$ but $D_\beta\neq D_\gamma$. Let $n_\alpha=1$ and $S_\alpha=\{\pi_1\}$, say. Then we consider
$$\Phi_{\pi_1^2}=\frac{L_{S_\alpha}(\overline{\psi},1)}{\Omega}+\frac{L_{S_\alpha}(\overline{\psi}_{\pi_1^2},1)}{\Omega}+\frac{L_{S_\alpha}(\overline{\psi}_{\pi_1^4},1)}{\Omega},$$
where the last two terms are primitive. Also,
\begin{align*}\frac{L_{S_\alpha}(\overline{\psi},1)}{\Omega}&=\left(1-\frac{\overline{\psi}((\pi_1))}{\pi_1\overline{\pi_1}}\right)\frac{L(\overline{\psi}, 1)}{\Omega}\\
&=\left(\frac{\pi_1-1}{\pi_1}\right)\frac{1}{3}
\end{align*}
and $\pi_1\equiv 1\bmod 9$. Hence $\mathrm{ord}_3\left(\frac{\pi_1-1}{\pi_1}\right)\geqslant 2$, and
$$\mathrm{ord}_3\left(\frac{L_{S_\alpha}(\overline{\psi},1)}{\Omega}\right)\geqslant 2-1=1.$$
Now let $\chi_1: \mathbb{Z}/3\mathbb{Z}\to \boldsymbol{\mu}_3$ be the character defined by $1\mapsto \omega$ and let  $\chi_2: \mathbb{Z}/3\mathbb{Z}\to \boldsymbol{\mu}_3$ be the character defined by $1\mapsto \omega^2$. Then we have
$$\Phi_{\pi_1^2}^{(\chi_i)}=\frac{L_{S_\alpha}(\overline{\psi},1)}{\Omega}+\omega^i\frac{L_{S_\alpha}(\overline{\psi}_{\pi_1^2},1)}{\Omega}+\omega^{2i} \frac{L_{S_\alpha}(\overline{\psi}_{\pi_1^4},1)}{\Omega},$$
for $i=1,2$.
Hence we obtain
$$\Phi_{\pi_1^2}-\omega\Phi_{\pi_1^2}^{(\chi_1)}=(1-\omega)\frac{L_{S_\alpha}(\overline{\psi},1)}{\Omega}+(1-\omega^2)\frac{L_{S_\alpha}(\overline{\psi}_{\pi_1^2},1)}{\Omega}.$$
We know that $\mathrm{ord}_3(\Phi_{\pi_1^2}-\omega\Phi_{\pi_1^2}^{(\chi_1)})\geqslant \frac{5}{4}$ (see  Corollary \ref{cor6} of Appendix A), and we also checked that $\mathrm{ord}_3\left(\frac{L_{S_\alpha}(\overline{\psi},1)}{\Omega}\right)\geqslant 1$, so $\mathrm{ord}_3\left((1-\omega)\left(\frac{L_{S_\alpha}(\overline{\psi},1)}{\Omega}\right)\right)\geqslant \frac{3}{2}$. It follows that
$$\mathrm{ord}_3\left((1-\omega^2)\left(\frac{L_{S_\alpha}(\overline{\psi}_{\pi_1^2},1)}{\Omega}\right)\right)\geqslant \frac{5}{4},$$
that is,
$$\mathrm{ord}_3\left(\frac{L_{S_\alpha}(\overline{\psi}_{\pi_1^2},1)}{\Omega}\right)\geqslant \frac{3}{4}.$$
But $\frac{L_{S_\alpha}(\overline{\psi}_{\pi_1^2},1)\sqrt[3]{\pi_1^2}}{\Omega}\in K$ so $\mathrm{ord}_3\left(\frac{L_{S_\alpha}(\overline{\psi}_{\pi_1^2},1)}{\Omega}\right)$ must be an integer multiple of $\frac{1}{2}$. Hence 
$$\mathrm{ord}_3\left(\frac{L_{S_\alpha}(\overline{\psi}_{\pi_1^2},1)}{\Omega}\right)\geqslant 1=\frac{1}{2}(n_\alpha+1)$$
as required.

Now suppose the result holds for all $n_\beta< n_\alpha$, where $\beta< \alpha$. We have
\begin{align*}\Phi_{D_\alpha^2}&=\frac{L_{S_\alpha}(\overline{\psi},1)}{\Omega}+\sum\limits_{n_\beta<n_\alpha}\frac{L_{S_\alpha}(\overline{\psi}_{
D_\beta^2},1)}{\Omega}+\sum\limits_{n_\gamma=n_\alpha}\frac{L_{S_\alpha}(\overline{\psi}_{D_\gamma^2},1)}{\Omega}
\end{align*}
where the terms in the last summand are primitive. 

We know that 
\begin{align*}\frac{L_{S_\alpha}(\overline{\psi},1)}{\Omega}&=\prod\limits_{\pi\in S_\alpha}\left(1-\frac{\overline{\psi}((\pi))}{\pi\overline{\pi}}\right)\frac{L(\overline{\psi},1)}{\Omega}\\
&=\prod\limits_{\pi\in S_\alpha}\left(\frac{\pi-1}{\pi}\right)\frac{1}{3}
\end{align*}
and $\pi\equiv 1\bmod 27$, so $\mathrm{ord}_3\left(\frac{L_{S_\alpha}(\overline{\psi},1)}{\Omega}\right)\geqslant 3n_\alpha-1$. Next, for $n_\beta<n_\alpha$, we have
$$\frac{L_{S_\alpha}(\overline{\psi}_{D_\beta^2},1)}{\Omega}=\prod\limits_{\pi\in S_\alpha\backslash S_\beta}\left(1-\frac{\overline{\psi}_{D_\beta^2}((\pi))}{\pi\overline{\pi}}\right)\frac{L(\overline{\psi}_{D_\beta^2},1)}{\Omega}$$
and $\psi_{D_\beta^2}((\pi))=\left(\frac{D_\beta}{\pi}\right)_3\pi=\omega^i \pi$, $i\in\{0,1,2\}$. Furthermore, by the induction hypothesis, \mbox{$\mathrm{ord}_3\left(\frac{L(\overline{\psi}_{D_\beta^2},1)}{\Omega}\right)\geqslant \frac{1}{2}(n_\beta+1)$}. It follows that
\begin{align*}\mathrm{ord}_3\left(\sum\limits_{n_\beta<n_\alpha}\frac{L_{S_\alpha}(\overline{\psi}_{D_\beta^2},1)}{\Omega}\right)&\geqslant \frac{1}{2}(n_\alpha-n_\beta)+\frac{1}{2}(n_\beta+1)\\
&=\frac{1}{2}(n_\alpha+1).\end{align*}

We also know by Lemma \ref{lem2.3.11} that $\mathrm{ord}_3(\Phi_{D_\alpha^2}^{(\chi)})\geqslant n_\alpha$ for any character $\chi: (\mathbb{Z}/3\mathbb{Z})^n\to \boldsymbol{\mu}_3$.

To find $\mathrm{ord}_3\left(\frac{L(\overline{\psi}_{D_\gamma^2},1)}{\Omega}\right)$ for $\gamma=(\gamma_1,\ldots , \gamma_n)\in\left(\mathbb{Z}/3\mathbb{Z}\right)^n$ with $n_\gamma=n_\alpha$, suppose first that $\gamma\neq (2,\ldots, 2)$, so there exists $j\in\{1,\ldots n\}$ with $\gamma_j=1$. Without loss of generality, we may assume $j=1$. Let $\chi_1: \left(\mathbb{Z}/3\mathbb{Z}\right)^n=\langle g_1,\ldots , g_n\rangle\to \boldsymbol{\mu}_3$ be the character defined by $\chi_1(g_1)=\omega$ and $\chi_1(g_j)=1$ for $j=2,\ldots , n$, and let  $\chi_2: \left(\mathbb{Z}/3\mathbb{Z}\right)^n=\langle g_1,\ldots , g_n\rangle\to \boldsymbol{\mu}_3$ be the character defined by $\chi_2(g_1)=\omega^2$ and $\chi_2(g_j)=1$ for $j=2,\ldots , n$. Then, by writing out $\Phi_{D_\alpha}-\omega\Phi_{D_\alpha}^{(\chi_1)}$ explicitly, we see that $\mathrm{ord}_3\left(\sum\limits_{\substack{n_\gamma<n_\alpha \\ \gamma_j=1}}\frac{L(\overline{\psi}_{D_\gamma^2},1)}{\Omega}\right)\geqslant \frac{1}{2}(n_\alpha+1)$ for any $j=1,\ldots , n$, and similarly  $\mathrm{ord}_3\left(\sum\limits_{\substack{n_\gamma<n_\alpha \\ \gamma_j=2}}\frac{L(\overline{\psi}_{D_\gamma^2},1)}{\Omega}\right)\geqslant \frac{1}{2}(n_\alpha+1)$ for any $j=1,\ldots , n$.

Now let $\chi_2$ be the character defined by $g_1\mapsto \omega$, $g_2\mapsto \omega$ and $g_j\mapsto 1$ for $j\neq 1,2$, and let $\chi_3$ be the character defined by $g_1\mapsto \omega^2$, $g_2\mapsto \omega$ and $g_j\mapsto 1$ for $j\neq 1,2$. Then an easy calculation gives
\begin{align*}(\Phi_{D_\alpha}^{(\chi_2)}-\omega\Phi_{D_\alpha}^{(\chi_3)})-(\Phi_{D_\alpha}-\omega\Phi_{D_\alpha}^{(\chi_1)})&=3\omega\sum_{\substack{n_\beta<n_\alpha \\ \beta_1=0, \beta_2=1}}\frac{L_{S_\alpha}(\overline{\psi}_{D_\beta^2},1)}{\Omega}-3\sum_{\substack{n_\beta<n_\alpha \\ \beta_1=0, \beta_2=2}}\frac{L_{S_\alpha}(\overline{\psi}_{D_\beta^2},1)}{\Omega}\\
&-3\sum_{\substack{n_\beta<n_\alpha \\ \beta_1=1, \beta_2=0}}\frac{L_{S_\alpha}(\overline{\psi}_{D_\beta^2},1)}{\Omega}+3\omega^2\sum_{\substack{n_\beta<n_\alpha \\ \beta_1=1, \beta_2=1}}\frac{L_{S_\alpha}(\overline{\psi}_{D_\beta^2},1)}{\Omega}\\
&+3\omega^2\sum_{\substack{n_\gamma=n_\alpha \\ \gamma_1=1, \gamma_2=1}}\frac{L(\overline{\psi}_{D_\gamma^2},1)}{\Omega}.
\end{align*}
So we have
$$\mathrm{ord}_3\left(\sum_{\substack{n_\gamma=n_\alpha \\ \gamma_1=1, \gamma_2=1}}\frac{L(\overline{\psi}_{D_\gamma^2},1)}{\Omega}\right)\geqslant \frac{1}{2}(n_\alpha+1).$$
Similarly, we can show
$$\mathrm{ord}_3\left(\sum_{\substack{n_\gamma=n_\alpha \\ \gamma_i=e_i, \gamma_j=e_j}}\frac{L(\overline{\psi}_{D_\gamma^2},1)}{\Omega}\right)\geqslant \frac{1}{2}(n_\alpha+1)$$
for any $e_i, e_j\in\{1,2\}$ with $i\neq j$. Now we claim the following:

\begin{lem}
Let $\gamma\in (\mathbb{Z}/3\mathbb{Z})^n$ be such that $n_\gamma=n_\alpha$. Then for any $J\subset \{1,\ldots , n\}$ and any $e_j\in \{1,2\}$ for $j\in J$ , we have $$\mathrm{ord}_3\left(\sum\limits_{\substack{\gamma_j=e_j \\j\in J}}\frac{L(\overline{\psi}_{D_\gamma^2},1)}{\Omega}\right)\geqslant M,$$
where $M\in \mathbb{Q}$ is such that $\mathrm{ord}_3\left(\sum\limits_{\substack{\gamma\in (\mathbb{Z}/3\mathbb{Z})^n\\ n_\gamma=n_\alpha}}\frac{L(\overline{\psi}_{D_\gamma^2},1)}{\Omega}\right)\geqslant M$.
\end{lem}
 \begin{proof} We prove this by induction on $|J|$. The cases $|J|=1,2$ were established above. Given $J\subset \{1,\ldots ,n\}$ and $e_j\in\{1,2\}$ for $j\in J$, let $X_J$ denote the sum
$$X_J:=\sum\limits_{\substack{\gamma_j=e_j \\j\in J}}\frac{L(\overline{\psi}_{D_\gamma^2},1)}{\Omega}.$$
Now suppose the lemma is true for any $J\subset \{1,\ldots , n\}$ with $|J|=k>1$. Then let $|J|=k+1$, and without loss of generality, we may assume $J=\{1,\ldots , k+1\}$. Pick $e_{j}\in\{1,2\}$ for $j\in J$. Then by the induction hypothesis, 
$\mathrm{ord}_3(X_{\{1,\ldots , k\}})\geqslant M$ and $\mathrm{ord}_3(X_{\{2,\ldots , k+1\}})\geqslant M$. Now, 
\begin{align*}X_{\{1,\ldots , k\}}-X_{\{2,\ldots k+1\}}&=\sum\limits_{\substack{\gamma_j=\epsilon_j j\in \{2,\ldots k\}\\ \gamma_1=e_1, \gamma_{k+1}\neq e_{k+1}}}\frac{L(\overline{\psi}_{D_\gamma^2},1)}{\Omega}-\sum\limits_{\substack{\gamma_j=\epsilon_j j\in \{2,\ldots k\} \\ \gamma_1\neq e_1, \gamma_{k+1}= e_{k+1}}}\frac{L(\overline{\psi}_{D_\gamma^2},1)}{\Omega}\\
&=A-B,
\end{align*}
say. Now, $A+B+X_J=X_{\{2,\ldots , k\}}$ so $\mathrm{ord}_3(A+B+X_J)\geqslant M$. On the other hand,
\mbox{$X_{\{1,\ldots k\}}+X_{\{2,\ldots k+1\}}=A+B+2X_J$} so $\mathrm{ord}_3(A+B+2X_J)\geqslant M$.  It follows that $\mathrm{ord}_3(X_J)\geqslant M$ as required.
\end{proof}

Hence applying the above lemma with $J=\{1, \ldots, n\}$, we see that for any $\gamma\in (\mathbb{Z}/3\mathbb{Z})^n$ and $n_\gamma=n_\alpha$, we have
 $$\mathrm{ord}_3\left(\frac{L(\overline{\psi}_{D_\gamma^2},1)}{\Omega}\right)\geqslant \frac{1}{2}(n_\alpha+1)$$
and the result follows.
\end{proof}

The following is an immediate consequence of Theorem \ref{thm2.3.13}.
\begin{thm}\label{thm2.3.15} 
Let $D>1$ be an integer which is a cube-free product of cubic-special primes. Then 
$$\mathrm{ord}_3\left(L^{(\text{alg})}\left(E(D^2),1\right)\right)\geqslant k(D)+1,$$
where $k(D)$ is the number of distinct rational prime factors of $D$.
\end{thm}

\begin{proof} The number of distinct primes in $K$ dividing $D$ is twice the number of distinct rational primes dividing $D$, so by Theorem \ref{thm2.3.13},
$$\mathrm{ord}_3\left(L^{(\text{alg})}\left(\overline{\psi}_{D^2},1\right)\right)\geqslant \frac{1}{2}(2(k(D)+1))=k(D)+\frac{1}{2}.$$
But we know $L^{(\text{alg})}\left(\overline{\psi}_{D^2},1\right)\in\mathbb{Q}$, so $\mathrm{ord}_3\left(L^{(\text{alg})}\left(\overline{\psi}_{D^2},1\right)\right)\geqslant k(D)+1$ as required.
\end{proof}

\begin{remark}\label{rem3}The bound in Theorem \ref{thm2.3.15} is sharp. For example, let $\pi=28+27\omega$ and let $D=\mathrm{N}(\pi)=757$, which is a rational prime. Then we have $L^{(\text{alg})}(E({D^2}),1)=9$ so $\mathrm{ord}_3\left(L^{(\text{alg})}(E({D^2}),1)\right)=2$.
\end{remark}
In fact, the numerical examples listed in Appendix B suggest that Theorem \ref{thm2.3.15} is true whenever $D>1$ is an odd integer congruent to $1$ modulo $9$ whose prime factors are congruent to $1$ modulo $3$. Finally, we note that the condition $D\equiv 1\bmod 9$ is not sufficient. Indeed, for $D=55$ we have $L^{(\text{alg})}(E({D^2}),1)=3$.

\appendix 

\section{}

\begin{lem}\label{lem4}
A rational prime $p$ is a special split prime if and only if it splits in $K$, and  $\psi(\mathfrak{p})\equiv \pm 1\bmod 4$ for both of the primes $\mathfrak{p}$ of $K$ above $p$. Moreover, $K(x(E[4]))=K(\boldsymbol{\mu}_{4}, \sqrt[3]{2})$.
\end{lem}

\begin{proof}
Put $F=K(E[4])$, and let $G$ denote the Galois group of $F$ over $K$. Since $E$ has good reduction at $2$, the action of $G$ on $E[4]$ defines an isomorphism
$$j: G\xrightarrow{\sim} \mathrm{Aut}_{\mathcal{O}_K}\left(E[4]\right)=\left(\mathcal{O}_K/4\mathcal{O}_K\right)^\times.$$
In particular, it follows that $[F:K]=12$, since $2$ is inert in $K$. Let $\tau$ denote the unique element of $G$ such that $j(\tau)= -1\bmod 4\mathcal{O}_K$. Then the field $L=K(x(E[4]))$ is the fixed field of $\tau$, so that $[L:K]=6$. Clearly, $K(E[2])=K(\sqrt[3]{2})$. Also by Weil pairing, we have $\boldsymbol{\mu}_4\subset F$. We claim that $L=K(\boldsymbol{\mu}_{4}, \sqrt[3]{2})$. We know that $E[2]=\{\mathcal{O} , (\frac{\sqrt[3]{2}\cdot 3}{2},0),  (\frac{\sqrt[3]{2}\cdot 3}{2}\omega,0),  (\frac{\sqrt[3]{2}\cdot 3}{2}\omega^2,0)\}$. Using the doubling formula, we get that the $x$-coordinate of a point in $E[4]\backslash E[2]$ satisfies
$$\frac{x^4+2\cdot 3^3x}{4x^3-3^3}=\frac{\sqrt[3]{2}\cdot 3}{2}.$$
Let $x=\frac{\sqrt[3]{2}\cdot 3}{2}z$, then the equation becomes
$$z^4-4z^3+8z+4=(z^2-2z-2)^2=0,$$
which has roots $z=1\pm \sqrt{3}$ each with multiplicity $2$. Hence the $x$-coordinate of a point in $E[4]\backslash E[2]$ is
$x=\frac{\sqrt[3]{2}\cdot 3 (1\pm\sqrt{3})}{2}\in K(\boldsymbol{\mu}_{4}, \sqrt[3]{2})$, as required.
Now let $p$ be any prime which splits in $K$, and let $\mathfrak{p}$ be one of the prime ideals of $K$ above $p$. Then the Frobenius automorphism of $K$ acts on $E[4]$ by multiplication by $\psi(\mathfrak{p})$, thanks to the main theorem of complex multiplication. It follows that $\mathfrak{p}$ splits completely in $F$ if and only if $\psi(\mathfrak{p})\equiv 1\bmod 4$, and $\mathfrak{p}$ splits completely in $L$ if and only if $\psi(\mathfrak{p})\equiv \pm 1\bmod 4$. 
\end{proof}

\begin{prop}\label{prop4}
Over the field
\begin{align*}F=K\left(\sqrt[6]{\frac{27+3\sqrt{-3}}{2}}\right),
\end{align*}
there exists a change of variables $x=u^2X+r$, $y=2u^3Y$ with $u, r\in F$ which gives the following equation for $E$
$$Y^2=X^3+\frac{(9+\sqrt{-3})}{4}X^2+\frac{13+3\sqrt{-3}}{8}X+\frac{2+\sqrt{-3}}{8}$$
which has good reduction at $3$. Here, $u=\frac{\sqrt{\alpha}}{\beta^2}$ where $\alpha=\frac{27+3\sqrt{-3}}{2}$, $\beta=\sqrt[3]{\frac{1-3\sqrt{-3}}{2}}$ and \mbox{$r=-\frac{3}{2}\sqrt[3]{\frac{-13-3\sqrt{-3}}{2}}$}.
\end{prop}

\begin{proof} Note that for our curve, the smallest split prime is $7$. So one should try to find an explicit equation for the curve $E$ over the field $F= K(E[2+\sqrt{-3}])$ having good reduction at $3$ (see \cite[Theorem 2]{coa-wil}). The conductor of $F$ over $K$ is $(3(2+\sqrt{-3}))$, since the conductor of the Gr\"{o}ssencharacter of $E/K$ is $3\mathcal{O}_K$. Furthermore, $F/K$ is an abelian extension of degree $6$ and the group $\mu_6\subset K$. Thus, by Kummer theory, we must
have $F = K(\sqrt[6]{\alpha})$, for some $\alpha\in K^*$. The only primes of $K$ which can ramify in $F$ are those dividing $7$, $3$ and $w$, so the Kummer generator $\alpha$ must be of the form $(2+\sqrt{-3})^a\cdot (\omega-1)^b\cdot (-\omega)^c$ where $a,b,c\in \{0,\ldots , 5\}$. Recall from the theory of complex multiplication that for a prime ideal $\mathfrak{p}$ of $K$ prime to $3$, we have $\psi_{E/K}(\mathfrak{p})=\pi$ where $\pi$ is the unique generator of $\mathfrak{p}$ which is $1\bmod 3\mathcal{O}_K$. Now, suppose in addition that $\mathfrak{p}$ is prime to $7$. Then $F/K$ is unramified at $\mathfrak{p}$ so
$$\mathrm{Frob}_{\mathfrak{p}}=\psi_{E/K}(\mathfrak{p}).$$
If we pick a prime $\mathfrak{p}=(\pi)$ such that $\pi\equiv 1\bmod 3\mathcal{O}_K$ and $\pi\equiv 1\bmod (2+\sqrt{-3})\mathcal{O}_K$, then we have
$$(P)^{\mathrm{Frob}_\mathfrak{p}}=\psi_{E/K}(\mathfrak{p})(P)=\pi(P)=P$$
for $P\in E[2+\sqrt{-3}]$, since $\pi\equiv 1\bmod (2+\sqrt{-3})\mathcal{O}_K$. So $\psi_{E/K}(\mathfrak{p})$ is the identity in the extension $K(E[2+\sqrt{-3}])/K$. On the other hand, $K(E[2+\sqrt{-3}])=K(\sqrt[6]{\alpha})$ and we know that
$$(\sqrt[6]{\alpha})^{\mathrm{Frob}_{\mathfrak{p}}}\equiv (\sqrt[6]{\alpha})^{\mathrm{N}(\mathfrak{p})}\bmod \mathfrak{p},$$
so for $\mathrm{Frob}_{\mathfrak{p}}$ to be the identity, it is necessary that
$$(\sqrt[6]{\alpha})^{\mathrm{N}(\mathfrak{p})}\equiv \sqrt[6]{\alpha}\bmod \mathfrak{p}.$$
We eliminate the possibilities for $(a, b, c)$ by trying out some examples.

\begin{eg}\label{ex1} Let $\pi=13+6\sqrt{-3}$ and $\mathfrak{p}=(\pi)$. Then $\pi\equiv 1\bmod 3\mathcal{O}_K$, $\pi\equiv 1\bmod (2+\sqrt{-3})\mathcal{O}_K$ and $\mathrm{N}(\mathfrak{p})=277$. So $(\sqrt[6]{\alpha})^{\mathrm{Frob}_\mathfrak{p}}\equiv (\sqrt[6]{\alpha})^{277}\equiv (\sqrt[6]{\alpha})\alpha^{46}$. Thus, for $\mathrm{Frob}_\mathfrak{p}$ to be the identity, we need
$$\alpha^{46}\equiv \left(2+\sqrt{-3}\right)^{46a}\left(\frac{-3+\sqrt{-3}}{2}\right)^{46b}\left(\frac{1-\sqrt{-3}}{2}\right)^{46c}\equiv 1\bmod \mathfrak{p}.$$
But $13+6\sqrt{-3}\equiv 0\bmod \mathfrak{p}$ so we can replace $\sqrt{-3}$ with $\frac{-13}{6}$ and now that we have rational numbers, we can replace $\bmod \;\mathfrak{p}$ with $\bmod\; \mathrm{N}(\mathfrak{p})$. Hence the equation becomes
$$\left(2-\frac{13}{6}\right)^{46a}\left(\frac{-3-\frac{13}{6}}{2}\right)^{46b}\left(\frac{1+\frac{13}{6}}{2}\right)^{46c}\equiv 1\bmod 277.$$ 
Also, $6^{-1}\equiv -46\bmod 277$ and $2^{-1}\equiv 139\bmod 277$, so 
$$\left(2+46\cdot 13\right)^{46a}\left(139(-3+46\cdot 13\right)^{46b}\left(139(1-46\cdot 13)\right)^{46c}\equiv 1\bmod 277,$$
that is,
\begin{equation}\label{eqex1}117^a\cdot 276^b\cdot 160^c\equiv 1\bmod 277.
\end{equation}
\end{eg}

\begin{eg}\label{ex2} Let $\pi=1+\frac{1+\sqrt{-3}}{2}\cdot 3(2+\sqrt{-3})=\frac{5+9\sqrt{-3}}{2}$. Then $\pi\equiv 1\bmod 3\mathcal{O}_K$, \mbox{$\pi\equiv 1\bmod (2+\sqrt{-3})\mathcal{O}_K$} and $\mathrm{N}(\mathfrak{p})=67$.  So $(\sqrt[6]{\alpha})^{\mathrm{Frob}_\mathfrak{p}}\equiv (\sqrt[6]{\alpha})^{67}\equiv (\sqrt[6]{\alpha})\alpha^{11}$. Hence for $\mathrm{Frob}_\mathfrak{p}$ to be the identity, we need
$$\alpha^{11}\equiv \left(2+\sqrt{-3}\right)^{11a}\left(\frac{-3+\sqrt{-3}}{2}\right)^{11b}\left(\frac{1-\sqrt{-3}}{2}\right)^{11c}\equiv 1\bmod \mathfrak{p}.$$
But we now have $\sqrt{-3}\equiv\frac{5}{9}\bmod \mathfrak{p}$, $9^{-1}\equiv 15\bmod 67$ and $2^{-1}\equiv 34\bmod 67$ so the equation becomes
$$(2+15\cdot 5)^{11a}(34(-3+15\cdot 5))^{11b}(34(1-15\cdot 5))^{11c}\equiv 1\bmod 67$$
that is,
\begin{equation}\label{eqex2}29^a\cdot 37^b\cdot 38^c\equiv 1\bmod 67.
\end{equation}
\end{eg}

Comparing the solutions to \eqref{eqex1} and \eqref{eqex2} in Examples \ref{ex1} and \ref{ex2}, we find that the common solutions are $(a,b,c)=(0,0,0), (1,3,2), (2,0,4), (3,3,0), (4,0,2)$ and $(5,3,4)$. However, we know that $F/K$ is a degree $6$ extension, so the only possibilities are $(a,b,c)=(1,3,2)$ and $(5,3,4)$. But $\frac{(2+\sqrt{-3})(\omega-1)(-\omega)}{\sqrt[6]{(2+\sqrt{-3})^5(\omega-1)^3(-\omega)^4}}=\sqrt[6]{(2+\sqrt{-3})(\omega-1)^3(-\omega)^2}$, so the corresponding fields are the same. Hence
\begin{align*}K(E[2+\sqrt{-3}])&=K\left(\sqrt[6]{(2+\sqrt{-3})(\omega-1)^3(-\omega)^2}\right)\\
&=K\left(\sqrt[6]{\frac{27+3\sqrt{-3}}{2}}\right).
\end{align*}
Let $E: y^2=4x^3-3^3$ and $x=u^2X+r$, $y=u^3Y$. Then in terms of $X,Y$, we have
$$u^6Y^2=4u^6X^3+12u^4rX^2+12u^2r^2X+4r^3-3^3$$
and $\mathrm{ord}_3\left(\sqrt[6]{\frac{27+3\sqrt{-3}}{2}}\right)=\frac{1}{4}$, so $\mathrm{ord}_3\left(\sqrt{\frac{27+3\sqrt{-3}}{2}}\right)=\frac{3}{4}$. We also have $\sqrt[3]{\alpha}=\sqrt[3]{2+\sqrt{-3}}\cdot \frac{-3+\sqrt{-3}}{2}\cdot \sqrt[3]{\left(\frac{1-\sqrt{-3}}{2}\right)^2}\in F$, so $\b=\sqrt[3]{(2+\sqrt{-3})\cdot\left(\frac{1-\sqrt{-3}}{2}\right)^2}=\sqrt[3]{\frac{1-3\sqrt{-3}}{2}}\in F$, so let $u=\frac{\sqrt{\alpha}}{\beta^2}$. Then $\mathrm{ord}_3(u)=\frac{3}{4}$ and $\mathrm{ord}_7(u)=-\frac{1}{12}$. If we divide the equation through by $u^6$, one can easily check that the discriminant of this curve is $u^{-12}\mathrm{disc}(E)$, so it is a $3$-adic unit and is integral at $7$. To make sure the coefficients of
$$Y^2=4X^3+\frac{12r}{u^2}X^2+\frac{12r^2}{u^4}X+\frac{4r^3-3^3}{u^6}$$
are integral at $3$, it is sufficient that $\mathrm{ord}_3(4r^2-3^3)\geqslant \mathrm{ord}_3(u^6)=\frac{9}{2}$. So we need $r=3s$ for some $s\in F$ and $\mathrm{ord}(4r^2-3^3)=\mathrm{ord}_3(3^3(4s^3-1))\geqslant \frac{9}{2}$, so $\mathrm{ord}_3(4s^3-1)\geqslant \frac{3}{2}$. 
Now, let 
\begin{align*}s=-\frac{\beta^2}{2}&=-\frac{1}{2}\sqrt[3]{(2+\sqrt{-3})^2\left(\frac{1-\sqrt{-3}}{2}\right)^4}\\
&=-\frac{1}{2}\sqrt[3]{\frac{-13-3\sqrt{-3}}{2}}.
\end{align*}
Then
\begin{align*}4s^3-1=\frac{13+3\sqrt{-3}}{4}-1=\frac{9+3\sqrt{-3}}{4}
\end{align*}
so $\mathrm{ord}_3(4s^3-1)=\frac{3}{2}$, as required.
Now, $r=3s=-\frac{3\beta^2}{2}$ and $u=\frac{\sqrt{\alpha}}{\beta^2}$, so
$$Y^2=4X^3-\frac{18\beta^6}{\alpha}X^2+\frac{27\beta^{12}}{\alpha^2}-\frac{27\beta^{12}(\beta^6+2)}{2\alpha^3}.$$

So substituting the values for $\alpha$ and $\beta$, we obtain an equation with coefficients in $K$:
$$Y^2=4X^3+(9+\sqrt{-3})X^2+\frac{13+3\sqrt{-3}}{2}X+\frac{2+\sqrt{-3}}{2}.$$
\end{proof}

\begin{cor}\label{cor6}
 For any character $\chi: \left(\mathbb{Z}/3\mathbb{Z}\right)^n\to \mathbb{C}^\times$, we have
$$\mathrm{ord}_3(\Phi_{D^2}^{(\chi)})\geqslant n+\frac{1}{4}.$$
\end{cor}
\begin{proof} We will assume for simplicity that $D$ is a prime power since we only use this Corollary in the case $n=1$. The proof for the case $n\geqslant 1$ is similar. Pick $\beta\in\mathcal{O}_K$ be such that $(1-\omega)\beta\equiv 1\bmod D$.
Let $\mathcal{C}$ be a set of elements of $\mathcal{O}_K$ such that $c \bmod D$ runs over $\left(\mathcal{O}_K/D\mathcal{O}_K\right)^\times$ precisely once and $\mathcal{C}$ can be written as a union of sets $\mathcal{C}=\bigcup\limits_{i\in\{0,1,2\}}\omega^i \mathcal{H}\bigcup\limits_{i\in\{0,1,2\}} \omega^i(1-\omega)\mathcal{H}\bigcup\limits_{i\in\{0,1,2\}}\omega^i\beta \mathcal{H}$ for some set $\mathcal{H}$. This is possible since $3$ and $D$ are coprime and $9$ divides the order of $1-\omega$ in $\left(\mathcal{O}_K/D\mathcal{O}_K\right)^\times$ by assumption. We will follow the notation in the proof of Lemma \ref{lem2.3.11}. Given $c\in V^{(\chi)}$, let $P$ be the point on $E: y^2=4x^3-3^3$ given by
$x(P)=\wp\left(\frac{c\O}{D},\mathcal{L}\right), y(P)=\wp'\left(\frac{c\O}{D},\mathcal{L}\right)$. Similarly let Q and R be the points given by $\left(x(Q),y(Q)\right)=\left(\wp\left(\frac{(1-\omega)c\O}{D},\mathcal{L}\right), \wp'\left(\frac{(1-\omega)c\O}{D},\mathcal{L}\right)\right)$ and $\left(x(R),y(R)\right)=\left(\wp\left(\frac{\beta c\O}{D},\mathcal{L}\right), \wp'\left(\frac{\beta c\O}{D},\mathcal{L}\right)\right)$ respectively, and define 
$$\mathscr{M}(c,D)=\frac{9-y(P)}{3-x(P)}.$$

We can write $V^{(\chi)}$ as a union of sets 
$$V^{(\chi)}=\bigcup\limits_{i\in\{0,1,2\}}\omega^iH\bigcup\limits_{i\in\{0,1,2\}} \omega^i(1-\omega)H\bigcup\limits_{i\in\{0,1,2\}}\omega^i\beta H$$
for some set $H$, since $\left(\frac{1-\omega}{D}\right)_3=\left(\frac{\beta}{D}\right)_3=1$. We wish to find $\mathrm{ord}_3\left(\sum\limits_{c\in V^{(\chi)}}\mathscr{M}(c,D)\right)$.
Recall that  $E$ has complex multiplication by $\omega$ via $\omega(x,y)=(\omega x,y)$, so $\wp'\left(\frac{\omega^i c\O}{D},\mathcal{L}\right)=\wp'\left(\frac{c\O}{D},\mathcal{L}\right)$. Moreover, $\mathcal{L}=\omega \mathcal{L}$ so $\wp\left(\frac{\omega^ic\O}{D},\mathcal{L}\right)=\wp\left(\frac{\omega^ic\O}{D},\omega^i\L\right)$ for $i=0, 1, 2$ and $\wp$ is homogeneous of degree $-2$ so
\begin{align*}\sum\limits_{i\in\{0,1,2\}}\frac{9-\wp'\left(\frac{w^ic\O}{D},\mathcal{L}\right)}{3-\wp\left(\frac{\omega^ic\O}{D},\mathcal{L}\right)}&=\frac{9-\wp'\left(\frac{c\O}{D},\mathcal{L}\right)}{3-\wp\left(\frac{c\O}{D},\mathcal{L}\right)}+\frac{9-\wp'\left(\frac{c\O}{D},\mathcal{L}\right)}{3-\omega\wp\left(\frac{c\O}{D},\mathcal{L}\right)}+\frac{9-\wp'\left(\frac{c\O}{D},\mathcal{L}\right)}{3-\omega^2\wp\left(\frac{c\O}{D},\mathcal{L}\right)}\\
&=\frac{3^5-3^3y(P)}{27-x(P)^3}.
\end{align*}

Furthermore, using the addition formula
$$\wp(z_1+z_2,\mathcal{L})=-\wp(z_1,\mathcal{L})-\wp(z_2,\mathcal{L})+\frac{1}{4}\left(\frac{\wp'(z_1,\mathcal{L})-\wp'(z_2,\mathcal{L})}{\wp(z_1,\mathcal{L})-\wp(z_2,\mathcal{L})}\right)^2,$$
and noting $\wp(z,\mathcal{L})$ is even and $\wp'(z,\mathcal{L})$ is odd, we get 
\begin{align*}\wp\left(\frac{(1-\omega)c\O}{D},\mathcal{L}\right)&=-\wp\left(\frac{c\O}{D},\mathcal{L}\right)-\wp\left(\frac{-\omega c\O}{D},\mathcal{L}\right)+\frac{1}{4}\left(\frac{\wp'(\frac{c\O}{D},\mathcal{L})-\wp'(\frac{-\omega c\O}{D},\mathcal{L})}{\wp\left(\frac{c\O}{D},\mathcal{L}\right)-\wp\left(\frac{-\omega c\O}{D},\mathcal{L}\right)}\right)^2\\
&=-(1+\omega)x(P)+\left(\frac{y(P)}{(1-\omega)x(P)}\right)^2.
\end{align*} 

Also, $\beta-\omega\beta\equiv 1\bmod D_a$ so 
\begin{align*}\wp\left(\frac{c\O}{D},\mathcal{L}\right)&=-\wp\left(\frac{\beta c\O}{D},\mathcal{L}\right)-\wp\left(\frac{-\omega \beta c\O}{D},\mathcal{L}\right)+\frac{1}{4}\left(\frac{\wp'(\frac{\beta c\O}{D},\mathcal{L})-\wp'(\frac{-\omega \beta c\O}{D},\mathcal{L})}{\wp(\frac{\beta c\O}{D},\mathcal{L})-\wp(\frac{-\omega\beta c\O}{D},\mathcal{L})}\right)^2\\
&=-(1+\omega)x(R)+\left(\frac{y(R)}{(1-\omega)x(R)}\right)^2.
\end{align*} 
Therefore,
\begin{align*}\sum\limits_{c\in V^{(\chi)}}\mathscr{M}(c,D)&=\sum\limits_{c\in H}\sum\limits_{i\in\{0,1,2\}}\frac{9-\wp'\left(\frac{w^ic\O}{D},\mathcal{L}\right)}{3-\wp\left(\frac{\omega^ic\O}{D},\mathcal{L}\right)}+\frac{9-\wp'\left(\frac{\omega^i(1-\omega)c\O}{D},\mathcal{L}\right)}{3-\wp\left(\frac{\omega^i(1-\omega)c\O}{D},\mathcal{L}\right)}+\frac{9-\wp'\left(\frac{w^i\beta c\O}{D},\mathcal{L}\right)}{3-\wp\left(\frac{\omega^i\beta c\O}{D},\mathcal{L}\right)},
\end{align*}
and this is equal to
\begin{equation}\label{eq3.1}\sum\limits_{c\in H}\frac{3^5-3^3y(P)}{27-x(P)^3}+\frac{3^5-3^3y(Q)}{27-\left(\omega^2x(P)+\left(\frac{y(P)}{(1-\omega)x(P)}\right)^2\right)^3}+\frac{3^5-3^3y(R)}{27-\left(\omega x(P)-\omega\left(\frac{y(R)}{(1-\omega)x(R)}\right)^2\right)^3}.
\end{equation}

To determine $\mathrm{ord}_3\left(\frac{y(P)}{(1-\omega)x(P)}\right)$, recall from Proposition \ref{prop4} that the change of variables $x=u^2X+r$, $y=2u^3Y$ where \mbox{$r=-\frac{3}{2}\sqrt[3]{\frac{-13-3\sqrt{-3}}{2}}$} gives us a model of $E$ having good reduction at $3$. In terms of $X$ and $Y$, we have
\begin{align*}\frac{y(P)}{(1-\omega)x(P)}=\frac{u^3Y(P)}{(1-\omega)(u^2X(P)+r)}.
\end{align*}
Now, $P$ is a torsion of point of $E$ of order prime to $3$ and $E$ has good reduction at $3$ so $\mathrm{ord}_3(X(P)), \mathrm{ord}_3(Y(P))\geqslant 0$. Also $\mathrm{ord}_3(u)=\frac{3}{4}$, and $\mathrm{ord}_3(r)=1$ so
$$\mathrm{ord}_3\left(\frac{y(P)}{(1-\omega)x(P)}\right)=\frac{3}{4}+\mathrm{ord}_3(Y(P)).$$
If  $\mathrm{ord}_3(Y(P))>0$, $P$ reduces to a $2$-torsion after reduction modulo $3$, but $P$ is a $D$-torsion and reduction modulo $3$ is injective, hence we must have $\mathrm{ord}_3(Y(P))=0$. Similarly $\mathrm{ord}_3\left(\frac{y(R)}{(1-\omega)x(R)}\right)=\frac{3}{4}$. We also showed in the proof of Corollary \ref{cor2.3.7} that $\mathrm{ord}_3(27-x(P)^3)=4$, so when we add the three terms in equation \eqref{eq3.1}, the product of the denominators has $3$-adic valuation $12$. 
 The numerator is of the form
\begin{align*}\left(27-x(P)^3\right)^2\left(3^6-3^3(y(P)+y(Q)+y(R))\right)+\left(\text{terms of $3$-adic valuation $\geqslant \frac{27}{2}$}\right),
\end{align*}
and $\mathrm{ord}_3(y(P))=\frac{9}{4}$, so
$$\mathrm{ord}_3\left(\sum\limits_{c\in V^{(\chi)}}\mathscr{M}(c,D)\right)\geqslant \left(8+\frac{21}{4}\right)-12=\frac{5}{4}.$$

On the other hand, by the proof of Lemma \ref{lem2.3.11}, we have $9\mid \#(V^{(\chi)})$. Thus,
\begin{align*}
\mathrm{ord}_3\left(\sum\limits_{c\in V^{(\chi)}}\mathcal{E}_1^*\left(\frac{c\O}{D}+\frac{\Omega}{3}, \mathcal{L}\right)\right)&=\min\left(\mathrm{ord}_3\left(\frac{1}{2}\sum\limits_{c\in V^{(\chi)}}\mathscr{M}(c,D)\right), \mathrm{ord}_3(\#(V^{(\chi)}))\right)\\
&\geqslant \frac{5}{4}
\end{align*}
as required.
\end{proof}

\section{Numerical examples}

\begin{minipage}{\textwidth}

The following examples are computed using Magma.

\subsection*{Quadratic Twists}
Let $E(D^3): y^2=4x^3-3^3D^3$, $\omega=e^{\frac{2\pi i}{3}}$. In what follows, $\pi$ denotes a prime of $K$ congruent to $1$ modulo $12$. In particular, $D=\mathrm{N}\pi$ is a special split prime defined in Definition \ref{def2.2.3}. We order $\pi=a+b\omega$, $a,b\in \mathbb{Z}$ ,by $|a|$ and then by $|b|$.

\begin{equation}\nonumber
\begin{array}{lllll}
\pi=a+b\omega& D=\mathrm{N}\pi&L^{(\text{alg})}(E(D^3),1)\\
\hline
 13+    12\omega  & 157 &12=2^2\cdot 3\\
13  +  24\omega &  433 &48=2^4\cdot 3\\
 -23-    12\omega &  397&0\\
-23-    36\omega & 997 &0\\
   25+    24\omega &601 &48=2^4\cdot 3\\
 25+    36\omega &1021 &12=2^2\cdot 3\\
   37+    60\omega&  2749 &12=2^2\cdot 3\\
37 +   72\omega &3889 &0\\
  37+    12\omega&  1069 &12=2^2\cdot 3\\
   47+    12 \omega& 1789 &12=2^2\cdot 3\\
   47+    24 \omega& 1657 &12=2^2\cdot 3\\
   49+    24 \omega& 1801 & 12=2^2\cdot 3\\
   49+    36\omega&  1933 &48=2^4\cdot 3\\
   49+    60\omega &3061& 12=2^2\cdot 3\\

  \hline
\end{array}
\end{equation}
\end{minipage}

\begin{equation}\nonumber
\begin{array}{lll}
\pi=a+b\omega& D=\mathrm{N}\pi&L^{(\text{alg})}(E(D^3),1)\\
\hline
   49+    72\omega & 4057&48=2^4\cdot 3\\   
   -59-    12\omega&  2917 & 0\\
   -59-    48 \omega& 2953 &12=2^2\cdot 3\\
    -59-    60\omega&  3541 &12=2^2\cdot 3\\
   -59-    84\omega & 5581 &48=2^4\cdot 3\\
   61+    24\omega&  2833 &108=2^2\cdot 3^3\\
 61+    72\omega&  4513&108=2^2\cdot 3^3\\
   61+    84\omega & 5653 &12=2^2\cdot 3\\
-71-  132\omega &13093 &12=2^2\cdot 3\\
   73+    96\omega&  7537 &108=2^2\cdot 3^3\\
   73+   108\omega & 9109 &48=2^4\cdot 3\\
   -83-   120\omega &11329 &59=2^4\cdot 3\\
   85+   156\omega &18301&0\\
85 +  168\omega& 21169&192=2^6\cdot 3\\
   -95-   156\omega& 18541 &0\\
   -71-    72 \omega& 5113 &0\\
   -71-    84\omega&  6133 &108=2^2\cdot 3^3\\
   73+    12 \omega& 4597 &0\\
   73+    24 \omega& 4153 &12=2^2\cdot 3\\
   73+    48\omega&  4129 &12=2^2\cdot 3\\
   73+    60 \omega& 4549 &48=2^4\cdot 3\\
   83+    12 \omega& 6037 &12=2^2\cdot 3\\
   -83-    36 \omega& 5197 &48=2^4\cdot 3\\
   -83-    48 \omega& 5209 &12=2^2\cdot 3\\
   85+    48 \omega& 5449 &192=2^6\cdot 3\\
   -95-    24 \omega& 7321 &0\\
   -95-    72 \omega& 7369 &0\\
   -95-    84\omega&  8101 & 0\\
   -95-   108\omega& 10429 &12=2^2\cdot 3\\
   97+    36 \omega& 7213 &12=2^2\cdot 3\\
   97+    48 \omega& 7057 &108=2^2\cdot 3^3\\
   97+    84 \omega& 8317 &12=2^2\cdot 3\\
   97+   108\omega& 10597 &12=2^2\cdot 3\\
   97+   132\omega& 14029 &12=2^2\cdot 3\\
  -107-    60\omega&  8629 &48=2^4\cdot 3\\
  -107-    72\omega&  8929 &48=2^4\cdot 3\\
  107+   120\omega& 13009 &48=2^4\cdot 3\\
  109+    60\omega&  8941 &12=2^2\cdot 3\\
  109+    84\omega&  9781 &48=2^4\cdot 3\\
  109+   144\omega& 16921 &0\\
  109+   156\omega& 19213 &108=2^2\cdot 3^3\\
  -119-    96\omega& 11953 &0\\
  -119-   108\omega& 12973 &48=2^4\cdot 3\\
  -119-   120\omega& 14281 &48=2^4\cdot 3\\
  -119-   132\omega& 15877& 108=2^2\cdot 3^3\\
  -119-   144\omega& 17761 &108=2^2\cdot 3^3\\

\hline

\end{array}
\end{equation}

\begin{equation}\nonumber
\begin{array}{lll}
\pi=a+b\omega& D=\mathrm{N}\pi&L^{(\text{alg})}(E(D^3),1)\\
\hline
  121+    72\omega& 11113 &12=2^2\cdot 3\\
  121+    96\omega& 12241 &0\\
    121+   156\omega& 20101 &48=2^4\cdot 3\\
  121+   180\omega& 25261 &108=2^2\cdot 3^3\\
  -131-   132\omega& 17293 &12=2^2\cdot 3\\
  -131-   156\omega& 21061& 12=2^2\cdot 3\\
 -131-   180\omega& 25981 &108=2^2\cdot 3^3\\
  133+   144\omega& 19273 &48=2^4\cdot 3\\
  133+   156\omega& 21277 &0\\
    -143-   144\omega& 20593 &48=2^4\cdot 3\\
  -143-   180\omega& 27109& 12=2^2\cdot 3\\
  145+   132\omega& 19309 &108=2^2\cdot 3^3\\
  145+   156\omega& 22741 &0\\
  145+   168\omega& 24889 &48=2^4\cdot 3\\
  -155-   144\omega& 22441 &12=2^2\cdot 3\\
  -155-   156\omega& 24181 &108=2^2\cdot 3^3\\
  -155-   168\omega& 26209 &12=2^2\cdot 3\\
  157+   144\omega& 22777 &300=2^2\cdot 3\cdot 5\\
  157+   168\omega& 26497 &0\\
  157+   180\omega& 28789 &12=2^2\cdot 3\\
  -167-   168\omega& 28057 &300=2^2\cdot 3\cdot 5\\
\hline
\end{array}
\end{equation}
\;

The following is a small sample of $D$ divisible by two relatively small (due to computational complexity) distinct special split primes.

\begin{equation}\nonumber
\begin{array}{lll}
 D&L^{(\text{alg})}(E(D^3),1)\\
\hline

157\cdot601&0\\
601\cdot 1021 &0\\
 157\cdot 1021 & 192=2^6\cdot3\\
157\cdot 1789 & 0\\
1021\cdot 1789 & 1200=2^4\cdot 3\cdot 5^2\\
\hline
\end{array}
\end{equation}

\subsection*{Cubic Twists}
Let $E(D^2): y^2=4x^3-3^3D^3$, $\omega=e^{\frac{2\pi i}{2}}$. Let $D$ be an odd, cube-free integer such that $D\equiv 1\bmod 9$ and $D$ is a product of  prime numbers  congruent to $1$ modulo $3$. We first list examples where $D$ is a prime number, $D=\mathrm{N}\pi$ and $\pi$ is a prime of $K$. We order $\pi$ by $|a|$ and then by $|b|$.

\begin{equation}\nonumber
\begin{array}{lll}
\pi=a+b\omega&D=\mathrm{N}\pi&L^{(\text{alg})}(E(D^2),1)\\
\hline
   1+    9\omega & 73  &  9=3^2\\
   1+   18\omega&  307  &  9=3^2\\
 1-27 \omega    &757&   27=3^3 \\
 1+ 81 \omega      &6481&   27=3^3 \\
    4+   15\omega&  181   & 9=3^2\\
   7+   12\omega&  109   & 9=3^2\\
   7+   30\omega &739   &36=2^2\cdot 3^2\\
   7+   39\omega &1297   & 9=3^2\\
         7+   48\omega &2017 &   9=3^2\\
 \hline
\end{array}
\end{equation}

\begin{equation}\nonumber
\begin{array}{lll}
\pi=a+b\omega&D=\mathrm{N}\pi&L^{(\text{alg})}(E(D^2),1)\\
\hline
  13+    6\omega &127 &   0\\
  13+   15\omega&  199  &  9=3^2\\
  13+   24\omega & 433   & 0\\
  16+   39\omega& 1153 &   9=3^2\\
  16+   57\omega& 2593  & 36=2^2\cdot 3^2\\
  19+   27\omega&  577  &  9=3^2\\
  19+   54\omega& 2251  & 36=2^2\cdot 3^2\\
  22+   15\omega&  379  &  0\\
  25+   21\omega&  541  &  9=3^2\\
  25+   39\omega& 1171   & 0\\
  28+    9\omega&  613    &9=3^2\\
  28+   45\omega& 1549 &   9=3^2\\
  31+    6\omega&  811  &  9=3^2\\
  31+   42\omega& 1423 &   9=3^2\\
  34+    3\omega& 1063  &  0\\
  34+   21\omega&  883  &  0\\
  34+   57\omega& 2467 &  36=2^2\cdot 3^2\\
  37+    9\omega& 1117  &  9=3^2\\
  37+   54\omega& 2287  &  9=3^2\\
  40+   51\omega& 2161  &  9=3^2\\
  43+   30\omega& 1459  &  0\\
  43+   39\omega& 1693  &  9=3^2\\
  43+   48\omega& 2089  &  0\\
  43+   57\omega& 2647  &  0\\
    46+    9\omega& 1783  &  9=3^2\\
  49+    6\omega& 2143  &  9=3^2\\
  49+   24\omega& 1801  &  0\\
  49+   33\omega& 1873  & 36=2^2\cdot 3^2\\
  49+   51\omega& 2503  &  9=3^2\\
  49+   60\omega& 3061  &  9=3^2\\
  52+   21\omega& 2053  &  9=3^2\\
 - 53+ 27 \omega    &4969&   27=3^3 \\
- 53 -135 \omega   &13879&   9=3^2\\
  55+   27\omega& 2269  &  0\\
  55+   36\omega& 2341  & 36=2^2\cdot 3^2\\
  55+   54\omega& 2971  & 36=2^2\cdot 3^2\\
  58+   15\omega& 2719  &  9=3^2\\
 58+   33\omega& 2539  &  0\\
 - 80 -27 \omega   &4969&   27=3^3 \\
- 80 -81 \omega    &  6481& 27=3^3 \\
82 -81 \omega     &19927&   243=3^5\\
82+ 135 \omega     & 13879&  9=3^2\\
- 107+54\omega  &20143  &    27=3^3 \\
 - 107+ 135 \omega& 44119&    27=3^3 \\
109 -81 \omega   &27271&   27=3^3 \\
136 -81 \omega    & 36073&  27=3^3 \\
\hline
\end{array}
\end{equation}
\; 

We list some examples where $D$ is divisible by at least two primes which are not necessarily distinct. Again, $D$ is an odd, cube-free integer such that $D\equiv 1\bmod 9$ and $D$ is a product of  prime numbers  congruent to $1$ modulo $3$.
\begin{equation}\nonumber
\begin{array}{llll}
&D&L^{(\text{alg})}(E(D^2),1)\\
\hline
 & 19^2 & 9=3^2 \\
&37^2&9=3^2 \\
 & 163^2 & 9=3^2 \\
 & 631^2 & 9=3^2\\
&7\cdot 211   &27=3^3\\
&7\cdot 2551  &108=2^2\cdot 3^3\\
&7\cdot 1381 &  27=3^3\\
&7\cdot 3037 &  27=3^3\\
& 19\cdot 37 & 27=3^3\\
& 37\cdot 163 & 27=3^3\\
&7\cdot 13\cdot 19 & 0\\
&7\cdot 13\cdot 19\cdot 37&0\\
&7\cdot 13\cdot 31\cdot 61&0\\
& 109\cdot 307 & 27=3^3\\
&19^2\cdot 163 & 27=3^3\\
&19\cdot 163^2 & 27=3^3\\
&19\cdot 37\cdot 163 & 0\\
&19^2\cdot 37\cdot 163 & 81=3^4\\
&19^2\cdot 37^2\cdot 163 & 0\\
&19\cdot 37\cdot 163^2 & 81=3^4\\
&19\cdot 37^2\cdot 163^2 & 0\\
&19^2\cdot 37\cdot 163^2 & 729=3^6\\
&19^2\cdot 37^2\cdot 163^2 & 2916=2^2\cdot 3^4\\
& 7\cdot 139 & 0\\
&79\cdot 139 & 0\\
&7^2\cdot 37^2 & 27=3^3\\
& 19^2\cdot 37^2 & 27=3^3\\
& 37^2\cdot 163^2 & 108=2^2\cdot 3^3\\
&7^2\cdot 13^2\cdot 19^2 & 81=3^4\\
&7^2\cdot 13^2\cdot 19^2\cdot 37^2&972=2^2\cdot 3^5\\
& 127^2 & 9=3^2\\
& 157^2 & 0\\
 & 229^2 & 0\\
 & 307^2 & 36=2^2\cdot 3^2\\
& 397^2 & 144=2^4\cdot 3^2\\
& 691^2 & 0\\
&127^2\cdot 307^2 & 432=2^4\cdot 3^3\\
&127\cdot 307^2 & 54864=2^4\cdot 3^3\cdot 127\\
&127^2\cdot 307 & 8289=3^3\cdot 307\\
&127\cdot 307&0\\
&127^2\cdot 397^2 & 27=3^3\\
& 307^2\cdot 397^2 &2187=3^7\\
\hline
\end{array}
\end{equation}

\end{document}